\title{An adaptive observer design approach for discrete-time nonlinear systems}
\author[ad1][ad2]{Krishnan SRINIVASARENGAN}
\author[ad1][ad2]{Jos\'{e} RAGOT}
\author[ad1][ad2]{Christophe AUBRUN}
\author[ad1][ad2]{Didier MAQUIN}
\address[ad1]{CRAN UMR 7039, Universit\'{e} de Lorraine, Vandoeuvre-l\`{e}s-Nancy, Cedex, France}
\address[ad2]{CRAN UMR 7039, CNRS, France \\ e-mail: \url{(krishnan.srinivasarengan, jose.ragot, christophe.aubrun,didier.maquin)@univ-lorraine.fr}}
\newcommand\bigzero{\makebox(0,0){\text{\Large0}}}
\newcommand\bigtranspose{\makebox(0,0){\text{\Large *}}}
\begin{document}
\begin{abstract}
We discuss a design approach for nonlinear discrete-time adaptive observer. This involves transforming a nonlinear system into a quasi-LPV (Linear Parameter Varying) polytopic model in Takagi-Sugeno (T-S) form using nonlinear embedding and sector nonlinearity (SNL) transformation. We then develop a discrete-time counterpart for a joint state and parameter estimation, based on design strategies developed for continuous time models in the existing literature. The design uses a Lyapunov approach and provides an error bounded by $\mathbb{L}_2$ gain. Based on this strategy, we propose a design for adaptive observers for nonlinear systems whose T-S form can have unmeasured premise variables.
\end{abstract}
\begin{keywords}
Adaptive observer, Joint state and parameter estimation, Takagi-Sugeno model, time-varying parameter estimation, Sector nonlinearity transformation.
\end{keywords}
\maketitle

\section{Introduction} \label{sec_introduction}
The term adaptive observers is used to represent joint state and parameter observers that estimate the states and parameters independently without augmenting them with the states \cite{ticlea_adaptive_2016}. For nonlinear systems, a systematic procedure for adaptive observers was first proposed in \cite{cho_systematic_1997}. These results were characterized further in \cite{besancon_remarks_2000}, where an adaptive observer form was introduced. However, the extension to discrete-time systems is not straightforward as it exploits the particular structure arising in continuous-time trajectories.

Other systematic adaptive observer design procedures attempt at specific applications such as fault diagnosis. In \cite{caccavale2008adaptive}, a diagnostic observer with an adaptive uncertainties estimation component is provided. All the states are assumed measured allowing the authors to use an innovation term as $\mathbf{e}_{x,k+1} - (A-K_o)\mathbf{e}_{x,k}$, where $A$ and $K_o$ are the system and the state observer gain matrices respectively with $\mathbf{e}_{x,k}$ representing the state error. This innovation allows for a cancellation of terms that complicate the adaptation of the approach in \cite{cho_systematic_1997} to discrete-time. 
Another fault detection application based on the design of an adaptive observer is proposed in \cite{thumati_model_2008}. Here, the model uncertainties are bounded by known constants and hence it was possible to propose a state observer with a Luenberger form. 
The parameter estimation component of the observer is tailored for fault detection using terms for fault detection such as threshold and dead-zone. 
The focus on fault detection allows these works to tune the observers with specific constraints, hence generalizing them to a generic adaptive observer is ruled out.

One way to develop observers for general nonlinear systems could be using equivalent forms. A recurring theme in the above works in boundedness of inputs, matrix entries, etc. If we add them with boundedness of the states of the system, we can obtain consider using the linear parameter varying (LPV) or quasi-LPV system formulations. A nonlinear state equation of the form,
\begin{align}
\mathbf{x}_{k+1} = f(\mathbf{x}_k, \mathbf{u}_k)
\end{align}
could be put through a systematic procedure of factorizing them like that proposed in \cite{kwiatkowski_automated_2006} to obtain a quasi-LPV form as,
\begin{align}
\mathbf{x}_{k+1} = A(\mathbf{x}_k, \mathbf{u}_k) \mathbf{x}_k + B(\mathbf{x}_k, \mathbf{u}_k) \mathbf{u}_k
\end{align}
For a system with unknown parameters $\theta_j$ in the adaptive observer form of \cite{cho_systematic_1997}, the quasi-LPV form would be,
\begin{align}
\mathbf{x}_{k+1} = \sum_{j=1}^{n_\theta} \left(A_j(\mathbf{x}_k, \mathbf{u}_k) \mathbf{x}_k \theta_j + B_j(\mathbf{x}_k, \mathbf{u}_k) \mathbf{u}_k \theta_j\right) \label{eq_quasi_lpv_nonlin}
\end{align}
and possibly with an affine term. Here, $n_\theta$ is the number of parameters. The observer design strategy for such quasi-LPV systems could follow that of the linear time-varying (LTV) systems, if the matrices depend only on measured variables like outputs and inputs, as pointed out in \cite{ticlea_exponential_2013}.

In that direction, we can consider the adaptive observer proposed for LTV systems in \cite{guyader2003adaptive}, which takes inspiration from its continuous-time counterpart in \cite{zhang2002adaptive}. The authors propose an innovation term whose gain is obtained by filtering the parameters' transmission matrix in the state equation. This structure along with some boundedness assumption allow showing the exponential convergence of the observer. If the system has bounded zero mean noise, the estimation errors have an expected value that exponentially converges to zero. The main issue with this approach is the lack of clear procedure to choose a scalar that helps to guarantee convergence. These criticisms lead the authors in \cite{ticlea_adaptive_2016} to propose an exponential forgetting factor based approach adopted from \cite{ticlea_exponential_2013}. This approach mimics a Kalman filter with an update and propagation step, but has two interconnected exponential forgetting factor designs, thus preserving the adaptive observer structure. The main assumptions are complete uniform observability of the system and the invertibility of the system matrix of the LTV, $A_k,\ \forall k$.  

The LTV based observers, however, cannot handle the case when the system matrices depend on one of the unmeasured states. This type of design can be handled in the realm of one of the quasi-LPV polytopic models: Takagi-Sugeno (T-S) form. One way to obtain a T-S model, that exactly represents the original nonlinear system with a sector, is using the sector nonlinearity (SNL) transformation \cite{ohtake2003fuzzy}. For the quasi-LPV model in \eqref{eq_quasi_lpv_nonlin}, applying SNL would lead to a T-S model of the form,
\begin{align}
\mathbf{x}_{k+1} &= \sum_{i=1}^{2^{n_p}} \mu_i(\mathbf{z}_k) \left[ (A_i+\sum_{j=1}^{n_\theta} \bar{A}_{ij}\theta_j) \mathbf{x}_k  \right. \nonumber \\
  & \left.  \qquad \qquad \qquad \qquad + (B_i+\sum_{j=1}^{n_\theta} \bar{B}_{ij}\theta_j)) \mathbf{u}_k \right] \nonumber \\
\mathbf{y}_k &= C \mathbf{x}_k \label{eq_ts_with_theta_initial}
\end{align}
where, $n_p$ is the number of premise variables $\mathbf{z}_k$, which could be one of the states, inputs, and outputs. We consider a linear output equation, as in \cite{cho_systematic_1997}. We assert that the observer design for such models can cover all possible systems that are represented by those in \cite{cho_systematic_1997}. The observer for this type of system should take into account the fact that the weighting functions $\mu_i$ would be depending on estimated premise variables, rather than exact ones. Fortunately, there is a growing body of literature for observer design for T-S system with unmeasured premise variables \cite{lendek2010stability}, as well as using immersion techniques to avoid T-S systems with unmeasured premise variables \cite{ichalal2016method}.


In this paper, we first develop a method for joint state and time-varying parameter estimation. The approach springs from the idea in \cite{bezzaoucha2013state2} to represent a time-varying parameter using SNL transformation. This was extended to T-S models with time-varying parameters in \cite{bezzaoucha2013state}. We derived the discrete-time version for the T-S models in \cite{srinivasarengan:hal-01399791}. In the present paper, we derive the time-varying parameter estimation for a linear system to illustrate the equivalence with that for the T-S models in  \cite{srinivasarengan:hal-01399791}. With this design approach, we propose an adaptive observer design for nonlinear systems whose T-S form has unmeasured premise variables. This would fill the gap that is left in the nonlinear discrete-time adaptive observers that cannot be solved by adapting LTV based design approaches.

This work builds on top of our communication in \cite{srinivasarengan:hal-01399791}. The key improvements are: to show the generalized nature of the results for both LTV and T-S system, providing extensions and refinement through corollaries, and to propose an adaptive observer design using the joint state and time-varying parameter estimation approach. Further, the illustration involves a different, but a more relevant example. The outline of the paper is as follows: the following section discusses the preliminaries that are used later in the paper. The Sec. \ref{sec_problem_formulation} discusses the model structure idea and formulates the problem. The joint state and time-varying parameter are derived for a linear time-varying system in Sec.\ref{sec_main_results}. These results are customized to design an adaptive observer design for nonlinear discrete-time systems in Sec. \ref{sec_adapt_observer_design}. A simulation example is given in Sec. \ref{sec_sim_example} to illustrate the proposed method. The paper is then summarized with a future outlook in Sec. \ref{sec_concluding_remarks}.

\section{Preliminaries} \label{sec_prelims}

\subsection{Notations}
Takagi-Sugeno models \cite{tanaka2004fuzzy} are of the form,
\begin{align}
\mathbf{x}_{k+1} &= \sum_{i=1}^r \mu_i(\mathbf{z}_k)\left[A_i \mathbf{x}_k + B_i \mathbf{u}_k \right] \nonumber\\
\mathbf{y}_k &= C\mathbf{x}_k \label{eq_ts_fund_model}
\end{align}
 Here, $r = 2^{n_p}$, where $n_p$ is the number of premise variables $\mathbf{z}_k$. The weighting functions $\mu_i(\mathbf{z}_k)$ capture the nonlinearity associated with the corresponding premise variables. Further, 
\begin{align}
\mathbf{x}_k \in \mathbb{R}^{n_x}, \ \mathbf{u}_k \in \mathbb{R}^{n_u}, \ \mathbf{z}_k \in \mathbb{R}^{n_p},\ \mathbf{y}_k \in \mathbb{R}^{n_y}\nonumber 
\end{align}
and 
\begin{align}
A_i \in \mathbb{R}^{n_x\times n_x}, \ \ B_i \in \mathbb{R}^{n_x\times n_u}, \ \ C \in \mathbb{R}^{n_y\times n_x}, \forall i
\end{align}
Given a symmetric matrix,
\begin{align*}
A = \begin{bmatrix}
a_{11} & a_{12} \\ * & a_{22}
\end{bmatrix}
\end{align*}
the '$*$' symbol represents the symmetric transpose element, that is, in this case, $*=a_{12}^T$.

\subsection{Preliminary Results}
The following known results would be referred to while proving the results in this work,
\begin{lemma}{}{\cite{boyd1994linear}} \label{lemma_schur} For a symmetric matrix $M$, given by,
\begin{align*}
M = \begin{bmatrix}
A & B \\ B^T & C
\end{bmatrix}
\end{align*}
if $C$ is invertible, then the following properties hold:
\begin{enumerate}
\item $M > 0$ iff $ C > 0 \ and \ A-BC^{-1}B^T > 0$
\item if $C > 0$, then $M \geq 0 $ iff $A-BC^{-1} B^T \geq 0$.
\end{enumerate}
\end{lemma}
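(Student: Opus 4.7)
The plan is to prove both items simultaneously by exhibiting a block $LDL^T$-type congruence factorization of $M$ and invoking the fact that congruence with an invertible matrix preserves positive (semi)definiteness. This is the cleanest route because it avoids any case analysis on eigenvalues or ad hoc vector arguments, and it makes the role of the invertibility of $C$ completely transparent.

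Concretely, I would first introduce the factorization
\begin{align*}
M \;=\; \begin{bmatrix} I & BC^{-1} \\ 0 & I \end{bmatrix} \begin{bmatrix} A-BC^{-1}B^T & 0 \\ 0 & C \end{bmatrix} \begin{bmatrix} I & 0 \\ C^{-1}B^T & I \end{bmatrix},
\end{align*}
which is well defined precisely because $C$ is invertible, and verify it by a direct block multiplication. The outer factors are unit block-triangular, hence nonsingular, and are transposes of each other, so the identity has the form $M = T^T D T$ with $T$ invertible. This is the key computational step; the rest is essentially bookkeeping.

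Next I would use the standard fact that for a symmetric matrix $N$ and invertible $T$ one has $N > 0 \iff T^T N T > 0$ and similarly for $\geq 0$ (both directions follow from the quadratic form characterization $v^T(T^T N T)v = (Tv)^T N (Tv)$ together with invertibility of $T$). Applying this to the factorization above reduces the (semi)definiteness of $M$ to that of the block-diagonal middle factor $D = \mathrm{diag}(A-BC^{-1}B^T,\,C)$, and block-diagonal positivity is equivalent to positivity of each diagonal block. This immediately gives item 1: $M > 0 \iff C > 0$ and $A - BC^{-1}B^T > 0$.

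For item 2, under the standing hypothesis $C > 0$ (so that in particular $C$ is invertible and the factorization is available), the same congruence argument gives $M \geq 0 \iff D \geq 0 \iff A - BC^{-1}B^T \geq 0$ and $C \geq 0$, and the latter is already assumed. The only mild subtlety I anticipate is making clear why the invertibility of $C$ is needed for item 2 as well (so that the Schur complement $A - BC^{-1}B^T$ is defined); this I would address by noting that item 2 is stated under $C > 0$, which is stronger than mere invertibility and hence legitimizes the same factorization used in item 1. There is no real obstacle beyond carefully writing the block product and citing the congruence-invariance principle.
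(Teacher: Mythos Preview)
Your proof is correct and is the standard congruence/block-$LDL^T$ argument for the Schur complement lemma. Note, however, that the paper does not supply its own proof of this statement: it is quoted as a preliminary result with a citation to \cite{boyd1994linear}, so there is no in-paper argument to compare against. Your approach is exactly the one given in that reference.
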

\begin{lemma}{}\cite{zhou1988robust}\label{lemma_timevaring_bounds}
Consider two matrices $X$ and $Y$ with appropriate dimensions, a time-varying matrix $\Delta_k$ and a positive scalar $\lambda$. The following property is verified:
\begin{align}
X^T \Delta^T_k Y + Y^T \Delta_k X \leq \lambda X^TX + \lambda^{-1} Y^TY
\end{align}
for $\Delta^T_k \Delta_k \leq I$
\end{lemma}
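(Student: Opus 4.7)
The plan is to prove this by the classical ``completing the square'' trick, which is the standard route for Young-type matrix inequalities of this form. The observation is that for any real matrix $M$, the product $M^T M$ is positive semidefinite, so writing the difference $\lambda X^T X + \lambda^{-1} Y^T Y - (X^T \Delta_k^T Y + Y^T \Delta_k X)$ as (or bounded below by) such a product immediately gives the inequality.

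Concretely, I would start from the trivial identity
\begin{align*}
\bigl(\sqrt{\lambda}\,\Delta_k X - \lambda^{-1/2} Y\bigr)^T \bigl(\sqrt{\lambda}\,\Delta_k X - \lambda^{-1/2} Y\bigr) \geq 0,
\end{align*}
which is well-defined because $\Delta_k X$ and $Y$ have matching dimensions (since the cross terms $Y^T \Delta_k X$ in the statement make sense). Expanding the product yields
\begin{align*}
\lambda\, X^T \Delta_k^T \Delta_k X \;-\; X^T \Delta_k^T Y \;-\; Y^T \Delta_k X \;+\; \lambda^{-1} Y^T Y \;\geq\; 0,
\end{align*}
and rearranging gives
\begin{align*}
X^T \Delta_k^T Y + Y^T \Delta_k X \;\leq\; \lambda\, X^T \Delta_k^T \Delta_k X + \lambda^{-1} Y^T Y.
\end{align*}

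The second ingredient is the hypothesis $\Delta_k^T \Delta_k \leq I$: multiplying on the left by $X^T$ and on the right by $X$ (which preserves the inequality in the positive-semidefinite sense) gives $X^T \Delta_k^T \Delta_k X \leq X^T X$. Substituting this into the previous bound produces exactly $X^T \Delta_k^T Y + Y^T \Delta_k X \leq \lambda X^T X + \lambda^{-1} Y^T Y$, as required.

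There is essentially no hard step here: the proof is purely algebraic and the only real choice is to pick the right linear combination inside the square. The scaling by $\sqrt{\lambda}$ and $\lambda^{-1/2}$ is forced by the need to make the cross terms reproduce $X^T\Delta_k^T Y + Y^T\Delta_k X$ without any residual factor of $\lambda$. The only point one must be careful about is using $\Delta_k^T\Delta_k \leq I$ (rather than $\Delta_k\Delta_k^T \leq I$), which is why the square is built around $\Delta_k X$ instead of $\Delta_k^T Y$; choosing the other combination would require the symmetric hypothesis.
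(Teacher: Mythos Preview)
Your proof is correct and is the standard completing-the-square argument for this Young-type matrix inequality. Note, however, that the paper does not actually supply its own proof of this lemma: it is listed among the preliminary results and simply cited from \cite{zhou1988robust}, so there is no paper proof to compare against; your argument is precisely the one found in that reference and the surrounding literature.
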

\begin{lemma}{}{\cite{de1992discrete}} \label{lemma_brl_discrete}
For a discrete-time system of the form,
\begin{align}
\mathbf{x}_{k+1} &= A \mathbf{x}_k + B \mathbf{u}_k \nonumber \\
\mathbf{y}_k &= C \mathbf{x}_k + D \mathbf{u}_k
\end{align}
the Bounded Real Lemma equivalent LMI condition for stability is,
{\small{
\begin{align}
&\qquad \qquad P=P^T>0, \nonumber \\ 
&\begin{bmatrix}
A^T P A-P +C^T C & A^T P B+C^TD \\
B^T P A + D^T C  & D^T D + B^TPB- \Gamma_2
\end{bmatrix}
\leq 0 \nonumber
\end{align}
}}
where $\Gamma_2$ is the $\mathbb{L}_2$ gain between the input $\mathbf{u}_k$ and the output $\mathbf{y}_k$.
\end{lemma}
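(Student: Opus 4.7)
The plan is to prove this via a quadratic storage function and a standard dissipativity argument. I would take $V(\mathbf{x}_k) = \mathbf{x}_k^T P \mathbf{x}_k$ with $P = P^T > 0$ as a candidate Lyapunov/storage function, and aim to show that the LMI condition is equivalent to the dissipation inequality
\begin{align*}
\Delta V_k + \mathbf{y}_k^T \mathbf{y}_k - \Gamma_2\, \mathbf{u}_k^T \mathbf{u}_k \leq 0,
\end{align*}
holding along all trajectories, where $\Delta V_k := V(\mathbf{x}_{k+1}) - V(\mathbf{x}_k)$. Summing this inequality from $k=0$ to $k=N$ with zero initial condition yields $\sum_{k=0}^N \mathbf{y}_k^T \mathbf{y}_k \leq \Gamma_2 \sum_{k=0}^N \mathbf{u}_k^T \mathbf{u}_k$, which is precisely the $\mathbb{L}_2$-gain bound with $\Gamma_2 = \gamma^2$.

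Next I would substitute the system dynamics into $\Delta V_k$:
\begin{align*}
\Delta V_k = (A\mathbf{x}_k + B\mathbf{u}_k)^T P (A\mathbf{x}_k + B\mathbf{u}_k) - \mathbf{x}_k^T P \mathbf{x}_k,
\end{align*}
and expand $\mathbf{y}_k^T \mathbf{y}_k = (C\mathbf{x}_k + D\mathbf{u}_k)^T(C\mathbf{x}_k + D\mathbf{u}_k)$. Collecting terms in the augmented vector $\xi_k = [\mathbf{x}_k^T\ \mathbf{u}_k^T]^T$ would give
\begin{align*}
\Delta V_k + \mathbf{y}_k^T\mathbf{y}_k - \Gamma_2\, \mathbf{u}_k^T\mathbf{u}_k = \xi_k^T M \xi_k,
\end{align*}
where $M$ is precisely the block matrix appearing in the lemma. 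Hence the LMI $M \leq 0$ is sufficient for the dissipation inequality, and therefore for the $\mathbb{L}_2$-gain bound. This handles the sufficiency direction cleanly.

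For necessity, the argument is less mechanical: one must show that if the system is internally stable and has $\mathbb{L}_2$-gain at most $\sqrt{\Gamma_2}$, then a $P > 0$ satisfying the LMI exists. The classical route is the discrete-time Kalman--Yakubovich--Popov (KYP) lemma, obtained by taking $P$ as the stabilizing solution of the associated discrete algebraic Riccati equation, or equivalently by a frequency-domain argument using Parseval's identity and the S-procedure on the bounded-real condition $\|G(e^{j\omega})\|_\infty^2 \leq \Gamma_2$. Alternatively, one can use a Schur complement (invoking Lemma~\ref{lemma_schur}) on the Riccati inequality $A^T P A - P + C^T C + (A^T P B + C^T D)(\Gamma_2 - D^T D - B^T P B)^{-1}(B^T P A + D^T C) \leq 0$ to recover the block-matrix LMI.

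The main obstacle is the necessity direction: sufficiency is a direct telescoping argument, but establishing existence of $P$ requires either the full KYP machinery or a frequency-domain excursion. Since the statement is cited from \cite{de1992discrete}, in practice I would only redo the sufficiency computation in detail and invoke the discrete-time KYP lemma for the converse, rather than re-deriving it from scratch.
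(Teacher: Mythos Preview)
The paper does not prove this lemma; it is listed as a preliminary result cited from \cite{de1992discrete} and used as a tool in the proof of Theorem~\ref{thm_linear_main}, so there is no author proof to compare against. Your sufficiency argument via the storage function $V_k=\mathbf{x}_k^TP\mathbf{x}_k$ and the telescoping dissipation inequality is the standard and correct derivation, and your remarks on necessity (via discrete KYP or the stabilizing Riccati solution, with the Schur complement recovering the block form) are accurate; for the purposes of this paper, only the sufficiency direction is actually invoked, so your detailed computation there is exactly what is needed.
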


\section{System Model and Problem Formulation} \label{sec_problem_formulation}
\subsection{Representing a time-varying parameter using SNL}
The idea of the estimation of time-varying parameter lies in representing it using the sector nonlinearity (SNL) transformation. The SNL assumes that the parameter is bounded and its boundary values are known. For a scalar parameter $\theta_k \in [\theta^1, \theta^2]$, we could write,
\begin{align}
\theta_k = \mu^1(\theta_k) \theta^{1} + \mu^2(\theta_k) \theta^{2} \label{eq_theta_SNL}
\end{align}
where,
\begin{align}
\mu^1(\theta_k) = \frac{\theta^2 - \theta_k}{\theta^2 - \theta^1}, \qquad \mu^2(\theta_k) = \frac{\theta_k - \theta^1}{\theta^2 - \theta^1}
\end{align}
The membership functions satisfy the convex sum property, that is,
\begin{align}
\sum_i \mu^{i}(.) = 1, \qquad 0 \leq \mu^{i}(.) \leq 1, \ \forall i \label{eq_convex_sum}
\end{align}
Hence each parameter could be represented by a weighted sum of two elements. 
\begin{remark}{}\label{remark_vector_variables}
For a vector case, or for T-S systems with unknown parameters, the membership functions can be manipulated to obtain weighting functions that depend on the same variables. To illustrate this, take the case of two unknown parameters, $\theta_{1,k} \in [\theta_{1}^1, \theta_{1}^2]$ and $\theta_{2,k} \in [\theta_{2}^1, \theta_{2}^2]$ and represented as, 
\begin{align}
\theta_{1,k} &= \mu_{1}^1(\theta_{1,k}) \theta_{1}^1 + \mu_{1}^2(\theta_{1,k}) \theta_{1}^2 \nonumber \\
\theta_{2,k} &= \mu_{2}^1(\theta_{2,k}) \theta_{2}^1 + \mu_{2}^2(\theta_{2,k}) \theta_{2}^2 \label{eq_weighting_functions_vector}
\end{align}
We can now create a new formulation for the unknown parameters by,
\begin{align}
\theta_{1,k} &= \left(\mu_{2}^1(\theta_{2,k})+\mu_{2}^2 (\theta_{2,k})\right) \theta_{1,k} \nonumber \\
\theta_{2,k} &= (\mu_{1}^1(\theta_{1,k})+\mu_{1}^2 (\theta_{1,k})) \theta_{2,k}
\end{align}
By bringing the alternative form in \eqref{eq_weighting_functions_vector}, we obtain the variables that depend on the same, but 4 weighting functions, which are the products of the membership functions of the original representation. In general, this approach would lead to $2^{n_\theta}$ submodels, where $n_\theta$ is the number of parameters. A detailed treatment of this representation could be obtained from \cite{nagy2010systematic}.
\end{remark}
\subsection{System Model Structures}
Consider a linear time-varying system, the time varying nature is due to the unknown parameters $\Theta_k$ as follows:
\begin{align}
\mathbf{x}_{k+1} &= A(\Theta_k) \mathbf{x}_k + B(\Theta_k) \mathbf{u}_k \nonumber \\
\mathbf{y}_k &= C \mathbf{x}_k
\end{align}
where $\Theta_k \in \mathbb{R}^{n_\theta}$ is used to represent the vector of $\theta_i, \forall i\in 1,..,n_\theta$. We consider only a specific form of time-varying matrices that takes the following form:
\begin{align}
A(\Theta_k) &= A_0 + \sum_{i=1}^{n_\theta} \theta_{i,k} \bar{A}_i, \nonumber \\ 
B(\Theta_k) &= B_0 + \sum_{i=1}^{n_\theta} \theta_{i,k} \bar{B}_i \label{eq_sys_matrices_theta_form}
\end{align}
that is, it is possible to write the time-varying matrix as a sum of constant matrices that are scaled by the unknown parameter. We can use the SNL transformation as in \eqref{eq_theta_SNL}, to represent the matrices of time-varying parameters. Let us consider a scalar case,
\begin{align}
A(\theta_k) &= A_0 + \theta_k \bar{A} \nonumber \\
&= A_0 + (\mu^1(\theta_k) \theta^1 + \mu^2(\theta_k) \theta^2) \bar{A} \nonumber \\
&= \sum_{j=1}^2 \mu^j(\theta_k) (A_0 + \theta^j) \bar{A} 
\end{align}
with $\theta^j$ corresponding to one of $\theta^1$ or $\theta^2$ depending upon the submodel $j$. Similarly,
\begin{align}
B(\theta_k) &= \sum_{j=1}^2 \mu^j(\theta_k) (B_0 + \theta^j) \bar{B} 
\end{align}
This could then be extended to the vector case to yield,
\begin{align}
A(\Theta_k) &= \sum_{i=1}^{r} h_i(\Theta_k) (A_0+\theta^i \bar{A}_i), \nonumber \\ 
B(\Theta_k) &= \sum_{i=1}^{r} h_i(\Theta_k) (B_0+\theta^i \bar{B}_i)
\end{align}
where $r=2^{n_\theta}$. Here $h_i(\Theta_k)$ is the normalized product of a membership function $\mu_i^j(\theta_{i,k})$ of each parameter corresponding the submodel $i$ (See Remark \ref{remark_vector_variables}) and $\theta^i$ represents the sector boundary values of each parameter corresponding to the submodel $i$. More details could be obtained from \cite{tanaka2004fuzzy}. This would lead to
\begin{align}
\mathbf{x}_{k+1} &= \sum_{i=1}^{r} h_i(\Theta_k) (A_i \mathbf{x}_k + B_i \mathbf{u}_k) \nonumber \\
\mathbf{y}_k &= C \mathbf{x}_k \label{eq_sys_model_lin_final}
\end{align}
with
\begin{align}
A_i &= A_0 + \sum_{j=1}^{n_\theta} \theta_j^i \bar{A}_{i} \qquad  B_i = B_0 + \sum_{j=1}^{n_\theta} \theta_j^i \bar{B}_{i} \label{eq_sys_matrices_final}
\end{align}
where $\theta_j^i$ is the corresponding maximum or minimum value of $\theta_j$ for the submodel $i$. For the model in \eqref{eq_sys_model_lin_final}, we propose an observer of the form,
\begin{align}
\mathbf{\hat{x}}_{k+1} &= \sum_{i=1}^r h_i(\hat{\Theta}_k) (A_i \mathbf{\hat{x}}_k + B_i \mathbf{u}_k + L_i (\mathbf{y}_k - \mathbf{\hat{y}}_k)) \nonumber \\
\hat{\Theta}_{k+1} &= \hat{\Theta}_k + \sum_{i=1}^r h_i(\hat{\Theta}_k) (K_{y,i} (\mathbf{y}_k - \mathbf{\hat{y}}_k) - K_{\theta} \hat{\Theta}_k) \nonumber \\
\mathbf{\hat{y}}_k &= C \hat{x}_k \label{eq_obs_model_lin_final}
\end{align}
The gains $L_{i} \in \mathbb{R}^{n_x\times n_y}$ and $K_{y,i} \in \mathbb{R}^{n_\theta \times n_y}$ are to be estimated while the gain $K_{\theta} \in \mathbb{R}^{n_\theta \times n_\theta}$ is chosen. The choice of $K_{\theta}$ shall typically be in the form of a diagonal matrix. In the initial work \cite{bezzaoucha2013state}, this was introduced to avoid a marginal stability condition for the error dynamics. As discussed in \cite{srinivasarengan2016nonlinear}, choosing this reduces the number of variables in the final LMI to solve and hence allows for a computationally tractable problem. Further, in the discrete-time case, $K_\theta$ as a variable leads to unresolvable nonlinear terms in the matrix inequalities.

\subsection{Uncertain-like model representation}
Let us define the state estimation error $\mathbf{e}_{\mathbf{x},k} = \mathbf{x}_k - \mathbf{\hat{x}}_k$. If we want to analyse the dynamics of the errors based on the system and observer models in \eqref{eq_sys_model_lin_final} and \eqref{eq_obs_model_lin_final}, it would involve comparing systems weighted by functions that depend on mismatched variables (i.e., $\mathbf{x}_k, \Theta_k$ vs $\mathbf{\hat{x}}_k, \hat{\Theta}_k$). This is a typical problem in observer design for T-S systems with unmeasured premise variables. There are different approaches to deal with it. In this work, we use the approach proposed in \cite{ichalal2009state} to develop an uncertain-like model representation. By making use of the convex sum property in \eqref{eq_convex_sum}, we can rewrite \eqref{eq_sys_model_lin_final}, without making any approximations, as,
\begin{align}
\mathbf{x}_{k+1} &= \sum_{i=1}^{r} h_i(\hat{\Theta}_k) (A_i \mathbf{x}_k + B_i \mathbf{u}_k) \nonumber \\ 
& \qquad + \sum_{i=1}^{r} (h_i(\Theta_k) - h_i(\hat{\Theta}_k)) (A_i \mathbf{x}_k + B_i \mathbf{u}_k) \nonumber \\
\mathbf{y}_k &= C \mathbf{x}_k
\end{align}
Let us denote,
\begin{align}
\Delta A_k &= \sum_{i=1}^r(h_i(\Theta_k) - h_i(\hat{\Theta}_k)) A_i = \mathcal{A} \Sigma_{A,k} E_A \nonumber \\
\Delta B_k &= \sum_{i=1}^r(h_i(\Theta_k) - h_i(\hat{\Theta}_k)) B_i = \mathcal{B} \Sigma_{B,k} E_B \label{eq_matrix_representation_uncertain_term}
\end{align} 
where, 
\begin{align}
\mathcal{A} &= \begin{bmatrix} A_1 & A_2 & ... & A_r \end{bmatrix} \in \mathbb{R}^{n_x\times n_xr} \nonumber \\
E_A &= \begin{bmatrix} I_{n_x} & I_{n_x} & ... & I_{n_x}\end{bmatrix}^T \in \mathbb{R}^{n_xr \times n_x} \nonumber\\
\Sigma_{A,k} &= \begin{bmatrix}
(h_1 - \hat{h}_1)I_{n_x} &  &  & \bigzero  \\
 &  & \cdots & \\
   \bigzero& & & (h_r - \hat{h}_r)I_{n_x}\\
\end{bmatrix} \label{eq_matrices_for_uncertain_form_A}
\end{align}
Similarly,
\begin{align}
\mathcal{B} &= \begin{bmatrix} B_1 & B_2 & ... & B_r \end{bmatrix} \in \mathbb{R}^{n_u\times n_ur} \nonumber \\
E_B &= \begin{bmatrix} I_{n_u} & I_{n_u} & ... & I_{n_u}\end{bmatrix}^T \in \mathbb{R}^{n_ur \times n_u} \nonumber\\
\Sigma_{B,k} &= \begin{bmatrix}
(h_1 - \hat{h}_1)I_{n_u} &  &  & \bigzero  \\
 &  & \cdots & \\
   \bigzero& & & (h_r - \hat{h}_r)I_{n_u}\\
\end{bmatrix} \label{eq_matrices_for_uncertain_form_B}
\end{align}
with $h_i$ and $\hat{h}_i$ stand for $h_i(\Theta)$ and $\hat{h}_i(\Theta)$ respectively. Noting that $-1 \leq (h_i - \hat{h}_i) \leq 1$, the matrices $\Sigma_{A,k} \in \mathbb{R}^{n_xr\times n_xr}$, $\Sigma_{B,k} \in \mathbb{R}^{n_ur\times n_ur}$ have the useful property,
\begin{align}
\Sigma_{A,k}^T \Sigma_{A,k} \leq I, \qquad \Sigma_{B,k}^T \Sigma_{B,k} \leq I \label{eq_scalar_product_timevarying_terms}
\end{align}
which will later be used to bound the time-varying difference between the known and estimated weighting functions. This will lead to the system model \eqref{eq_sys_model_lin_final} be represented as,
\begin{align}
\mathbf{x}_{k+1} &= \sum_{i=1}^r h_i(\hat{\Theta}) [(A_i+\Delta A_k)\mathbf{x}_k + (B_i+\Delta B_k)\mathbf{u}_k] \nonumber \\
\mathbf{y}_k &= C \mathbf{x}_k \label{eq_sys_model_lin_final_uncertain}
\end{align}
As the model \eqref{eq_sys_model_lin_final_uncertain} and its observer \eqref{eq_obs_model_lin_final} now share the same weighting functions $h_i(\hat{\Theta})$, it's therefore possible to express in a more simple and tractable form the state and the parameter estimation errors.
\section{Joint state and time-varying parameter estimation} \label{sec_main_results}
In this section, we provide the results for the stability analysis of the design of joint state and parameter observer. The results can be considered the discrete-time version of the observer design in \cite{bezzaoucha2013state} and follows the steps in \cite{srinivasarengan:hal-01399791}.
\begin{theorem}{} \label{thm_linear_main}
Given the system model of the form \eqref{eq_sys_model_lin_final}, there exists an observer of the form \eqref{eq_obs_model_lin_final}, if there exists $P_0$, $P_1$, $R_{i}$, $F_{i}$, $\lambda_1$, $\lambda_2$, $\lambda_3$, $\lambda_4$, $\Gamma_2^j$ ($\forall i \in [1,r], \forall j\in \{x,u,\theta,\Delta\theta\}$), such that,
\begin{align}
&P_0 = P_0^T > 0, \ P_1 = P_1^T > 0 \nonumber \\
&\lambda_m>0,\ \forall m \in \{1,2,3,4\}, \ \Gamma_2^j>0, \forall j
\end{align}
\begin{align}
\begin{bmatrix}
-P+I & Q_{A,i}    & \Phi_i^T P & 0      &  \\
  *  & T_{22} &  0         & Q_B^T  &  T_{\mathcal{A}\mathcal{B}} \\
  *  &  *     &   -P       & 0      &  \\
  *  &  *     &    *       & -P     &  \\
     & \bigtranspose &          &       & \Lambda \\
\end{bmatrix} < 0 \label{eq_main_lmi_condition}
\end{align}
Here,
\begin{align}
P_{0,i} &= A_{i}^T P_0-C^T R_{i}^T,  \ \ P = diag(P_0, P_1), \nonumber \\ 
\Phi_{i}^T P &= \begin{bmatrix} P_{0,i}  & -C^T F^T_{i} \\ 0  & - K_{\theta}^T P_1 \end{bmatrix} \nonumber \\
Q_{A,i} &= \begin{bmatrix}
0 & 0 & -C^TF_{i}^T (I+K_{\theta}) & -C^T F_{i}^T \\
& & & & \\
0 & 0 & -K_{\theta}^T P_1 (I+K_{\theta}) & -K_{\theta}^T P_1
\end{bmatrix} \nonumber
\\ 
Q_{B} &= \begin{bmatrix}
0 & 0 \\ 
0 & 0 \\
0 & (I+K_{\theta})^T P_1 \\ 
0 & P_1
\end{bmatrix} \nonumber \\
T_{22} &= 
\begin{bmatrix}
T_{22}^{11} & 0 & 0 & 0 \\
0 & T_{22}^{22} & 0 & 0 \\
0 & 0 & -\Gamma_2^\theta & 0 \\
0 & 0 & 0 & -\Gamma_2^{\Delta \theta}
\end{bmatrix}\nonumber \\
\Lambda &= diag(-\lambda_1I, -\lambda_3I, -\lambda_2I, -\lambda_4I) \nonumber \\
T_{\mathcal{A}\mathcal{B}} &= \begin{bmatrix} 
P_{0,i} \mathcal{A} & P_{0,i} \mathcal{B} & 0 & 0 \\
0   &   0 & 0 & 0 \\
0   &   0 & 0 & 0 \\
0   &   0 & P_0 \mathcal{A} & P_0 \mathcal{B}
\end{bmatrix} \label{eq_theorem_term_defs}
\end{align}
where, $T_{22}^{11} = -\Gamma_2^x+(\lambda_1+\lambda_2)E_A^TE_A$ and $T_{22}^{22} = -\Gamma_2^u+(\lambda_3+\lambda_4)E_B^TE_B$. The observer gains are given by,
\begin{align}
K_{y,i} = P_1^{-1} F_{i}, \qquad L_{i} = P_0^{-1} R_{i}
\end{align}
\end{theorem}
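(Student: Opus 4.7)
The plan is to build a Lyapunov--BRL argument on the augmented error $\bar{\mathbf{e}}_k = [\mathbf{e}_{x,k}^T \ \mathbf{e}_{\theta,k}^T]^T$ where $\mathbf{e}_{x,k} = \mathbf{x}_k - \hat{\mathbf{x}}_k$ and $\mathbf{e}_{\theta,k} = \Theta_k - \hat{\Theta}_k$, while treating $\mathbf{x}_k$, $\mathbf{u}_k$, $\Theta_k$ and the increment $\Delta\Theta_k = \Theta_{k+1} - \Theta_k$ as exogenous disturbances so that Lemma \ref{lemma_brl_discrete} yields an $\mathbb{L}_2$-type bound on the errors.

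First I would replace the plant \eqref{eq_sys_model_lin_final} by its uncertain-like form \eqref{eq_sys_model_lin_final_uncertain} so that both plant and observer share the weights $h_i(\hat{\Theta}_k)$. Subtracting \eqref{eq_obs_model_lin_final} from that form gives
\begin{align}
\mathbf{e}_{x,k+1} &= \sum_{i=1}^r h_i(\hat{\Theta}_k)\bigl[(A_i-L_iC)\mathbf{e}_{x,k} + \Delta A_k \mathbf{x}_k + \Delta B_k \mathbf{u}_k\bigr], \nonumber \\
\mathbf{e}_{\theta,k+1} &= (I+K_\theta)\mathbf{e}_{\theta,k} - \sum_{i=1}^r h_i(\hat{\Theta}_k) K_{y,i} C \mathbf{e}_{x,k} \nonumber \\
& \qquad - K_\theta \Theta_k + \Delta\Theta_k. \nonumber
\end{align}
Collecting these into an augmented equation of the shape $\bar{\mathbf{e}}_{k+1} = \sum_i h_i(\hat{\Theta}_k)(\Phi_i \bar{\mathbf{e}}_k + \Psi_{i,k} \mathbf{w}_k)$ with disturbance $\mathbf{w}_k = [\mathbf{x}_k^T \ \mathbf{u}_k^T \ \Theta_k^T \ \Delta\Theta_k^T]^T$ yields exactly the matrices $\Phi_i$, $Q_{A,i}$ and $Q_B$ visible in \eqref{eq_theorem_term_defs}; the uncertainty-carrying part $\Delta A_k \mathbf{x}_k + \Delta B_k \mathbf{u}_k$ is kept separate using the factorisation \eqref{eq_matrix_representation_uncertain_term}.

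Next I would apply Lemma \ref{lemma_brl_discrete} to the augmented error system, taking $\bar{\mathbf{e}}_k$ itself as the output (which produces the $+I$ in the $(1,1)$ block) and the block-diagonal Lyapunov matrix $P = \mathrm{diag}(P_0, P_1)$. The change of variables $R_i = P_0 L_i$ and $F_i = P_1 K_{y,i}$ linearises the products of the Lyapunov blocks with the observer gains and accounts for the form of $\Phi_i^T P$ in \eqref{eq_theorem_term_defs}. The raw BRL inequality contains the quadratic terms $\Phi_i^T P \Phi_i$ and $\Psi_{i,k}^T P \Psi_{i,k}$; two Schur-complement expansions via Lemma \ref{lemma_schur} move these into the separate diagonal $-P$ blocks that occupy the third and fourth rows of \eqref{eq_main_lmi_condition}.

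The main obstacle is the bilinear dependence on the time-varying matrices $\Sigma_{A,k}, \Sigma_{B,k}$ that sit inside $\Psi_{i,k}$ through $\Delta A_k = \mathcal{A}\Sigma_{A,k} E_A$ and $\Delta B_k = \mathcal{B}\Sigma_{B,k} E_B$. I would isolate the resulting cross terms and apply Lemma \ref{lemma_timevaring_bounds} once per cross term, using scalars $\lambda_1,\lambda_3$ for the state-error row and $\lambda_2,\lambda_4$ for the parameter-error row; property \eqref{eq_scalar_product_timevarying_terms} supplies the required bound $\Sigma^T\Sigma \leq I$. Each application inflates the $(2,2)$ diagonal by $(\lambda_1+\lambda_2)E_A^TE_A$ and $(\lambda_3+\lambda_4)E_B^TE_B$, matching $T_{22}^{11}$ and $T_{22}^{22}$, while the $\lambda_m^{-1}$ residues are gathered into an extra block that is Schur-complemented one final time to reveal the column $T_{\mathcal{A}\mathcal{B}}$ together with $\Lambda = \mathrm{diag}(-\lambda_1 I,-\lambda_3 I,-\lambda_2 I,-\lambda_4 I)$. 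A last invocation of the convex-sum property $\sum_i h_i(\hat{\Theta}_k) = 1$ reduces the weighted sum of inequalities to the per-$i$ LMI \eqref{eq_main_lmi_condition}, and the BRL conclusion then delivers the desired finite $\mathbb{L}_2$ gains $\Gamma_2^x, \Gamma_2^u, \Gamma_2^\theta, \Gamma_2^{\Delta\theta}$ from the disturbances to the augmented error.
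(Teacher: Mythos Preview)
Your proposal follows essentially the same route as the paper's proof: pass to the uncertain-like representation, derive the augmented error dynamics, apply the discrete-time BRL with a block-diagonal $P$ and the linearising substitutions $R_i=P_0L_i$, $F_i=P_1K_{y,i}$, then use two Schur complements on the quadratic $\Phi_i^TP\Phi_i$ and $\Psi_{i,k}^TP\Psi_{i,k}$ terms, bound the four time-varying cross terms via Lemma~\ref{lemma_timevaring_bounds} with $\lambda_1,\dots,\lambda_4$, and Schur-complement the resulting $\lambda_m^{-1}$ blocks to obtain $T_{\mathcal{A}\mathcal{B}}$ and $\Lambda$. The only slip is in your displayed $\mathbf{e}_{\theta,k+1}$ equation, where the coefficient on $\mathbf{e}_{\theta,k}$ should match the $(2,2)$ block of the paper's $\Phi_i$ (namely $-K_\theta$) rather than $(I+K_\theta)$; since you then defer to the $\Phi_i$, $Q_{A,i}$, $Q_B$ of \eqref{eq_theorem_term_defs} anyway, this does not affect the rest of the argument.
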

\begin{proof}{}
Consider the uncertain-like representation of the system in \eqref{eq_sys_model_lin_final_uncertain}. Comparing it with the observer \eqref{eq_obs_model_lin_final}, and defining errors of the form,
\begin{align*}
\mathbf{e}_{\mathbf{x},k} = \mathbf{x}_k - \mathbf{\hat{x}}_k, \ \ \ \mathbf{e}_{\Theta,k} = \Theta_k -\hat{\Theta}_k
\end{align*}
we get,
{\small{
\begin{align}
\mathbf{e}_{\mathbf{x},k+1} &= \sum_{i=1}^r h_i(\hat{\Theta}_k) [(A_{i}-L_{i} C) \mathbf{e}_{\mathbf{x},k} + \Delta A_k \mathbf{x}_k + \Delta B_k \mathbf{u}_k] \nonumber \\
\mathbf{e}_{\Theta,k+1} &= \sum_{i=1}^r h_i(\hat{\Theta}_k)[\Delta \Theta_k + (I+K_{\theta})\theta_k \nonumber \\
 & \qquad \qquad \qquad \qquad  - K_{y,i} C \mathbf{e}_{\mathbf{x},k} - K_{\theta} \mathbf{e}_{\Theta,k}]
\end{align}
}}%
where, $\Delta \Theta_k = \Theta_{k+1} - \Theta_k$. We can represent this error dynamics as,
\begin{align}
\begin{bmatrix} \mathbf{e}_{\mathbf{x},k+1} \\ \\ \mathbf{e}_{\Theta,k+1}\end{bmatrix} 
= 
\Phi_i \begin{bmatrix} \mathbf{e}_{\mathbf{x},k} \\ \\ \mathbf{e}_{\Theta,k}\end{bmatrix}
+
\Psi_{i,k} \begin{bmatrix} \mathbf{x}_k \\ \mathbf{u}_k \\ \Theta_k \\ \Delta \Theta_k \end{bmatrix}
\end{align}
where,
\begin{align}
\Phi_{i} &= \begin{bmatrix}
A_{i} - L_{i} C & 0 \\ -K_{y,i} C & -K_{\theta}
\end{bmatrix}
\nonumber \\
\Psi_{i,k} &= \begin{bmatrix}
\Delta A_k & \Delta B_k & 0 & 0 \\
0 & 0 & I+K_{\theta} & I
\end{bmatrix} \label{eq_phi_psi}
\end{align}
Considering, 
\begin{align*}
\mathbf{e}_{a,k} = \begin{bmatrix} \mathbf{e}_{\mathbf{x},k}^T & \mathbf{e}_{\Theta,k}^T \end{bmatrix}^T, \ 
\tilde{u}_k = \begin{bmatrix} \mathbf{x}_k^T & \mathbf{u}_k^T & \Theta_k^T & \Delta \Theta_k^T \end{bmatrix}^T
\end{align*}
the error dynamics can be written by,
\begin{align}
\mathbf{e}_{a,k+1} = \Phi_i \mathbf{e}_{a,k} + \Psi_{i,k} \tilde{u}_k \label{eq_error_dyn_augmented}
\end{align}
The aim here is the asymptotic decay of the error and the minimization of the effect of $\tilde{u}_k$ on the error. It is to be noted that $\Phi_i$ has constant entries, but $\Psi_{i,k}$ has time varying entries. To analyze the stability of \eqref{eq_error_dyn_augmented}, we consider the following Lyapunov candidate,
\begin{align}
V_k = \mathbf{e}_{a,k}^T P \mathbf{e}_{a,k}
\end{align}
Since there are time-varying perturbations that affect the error $\mathbf{e}_{a,k}$ in \eqref{eq_error_dyn_augmented}, the sufficient condition for stability that we consider is,
\begin{align}
V_{k+1} < V_k - (\mathbf{e}_{a,k}^T \mathbf{e}_{a,k} - \tilde{u}_k^T \Gamma_2 \tilde{u}_k) \label{eq_lyapunov_trajectory}
\end{align}
where $\Gamma_2$ is a block diagonal matrix with the entries 
\begin{align}
\Gamma_2 = diag(\Gamma_2^x, \Gamma_2^u, \Gamma_2^\theta, \Gamma_2^{\Delta \theta})
\end{align}
that represent the $\mathbb{L}_2$-gains of the effect of the elements in $\tilde{u}$ on the error, respectively. By applying the discrete-time version of the bounded real lemma (BRL) in Lemma \ref{lemma_brl_discrete}, we get the matrix inequality condition,
\begin{align}
\begin{bmatrix}
\Phi_{i}^T P \Phi_{j} - P + I  &  \Phi_{i}^T P \Psi_{i,k} \\
*  & \Psi_{i,k}^T P \Psi_{j,k}-\Gamma_2
\end{bmatrix}
< 0 \label{eq_quadratic_form_after_BRL}
\end{align}
The introduction of $j$ terms is to illustrate that we have cross terms between the different submodels. However, we could take the more conservative condition of considering $j=i$, based on the illustrations in Theorem 17 in \cite{blanco2001stabilisation}. We find another form for \eqref{eq_quadratic_form_after_BRL}, so as to,
\begin{itemize}
\item Obtain linear bounds for the nonlinearities (in $\Phi_{i}^T P \Phi_{j}$, $\Phi_i^T P \Psi_{i,k}$ and their transposes)
\item Obtain bounds for the time-varying terms (in $\Phi_i^T P \Psi_{i,k}$ and $\Psi_{i,k}^T P \Psi_{j,k}$)
\end{itemize}
\paragraph{Reducing nonlinearities} The quadratic terms associated with $\Phi_i$ and $\Psi_{i,k}$ could be reduced to linear terms. By using the Schur complements (Lemma \ref{lemma_schur}) for the nonlinear terms, the matrix terms in \eqref{eq_quadratic_form_after_BRL} could be reduced to, 
\begin{align} \label{eq_quadratic_form_after_BRL_simplified}
\begin{bmatrix}
-P+I & \Phi_{i}^T P \Psi_{i,k} & \Phi_{i}^T P & 0  \\
  *    & -\Gamma_2 & 0 & \Psi_{i,k}^T P \\
  *    & * & -P & 0 \\
  *    & * & * & -P
\end{bmatrix} < 0
\end{align}
This has not resolved all the nonlinear entries, though, and the residual factors are in the form of unresolvable terms inside $\Phi_i^T P$ and $\Phi_i^T P \Psi_{i,k}$. This is because as in \eqref{eq_phi_psi}, $\Phi_i$ has two variables $L_i$ and $K_{y,i}$, as part of the matrix split into $n_x$ and $n_\theta$ blocks. This issue is alleviated in two steps:
\begin{itemize}
\item Consider a diagonal structure for the Lyapunov matrix $P = \begin{bmatrix} P_0 & 0 \\ 0 & P_1 \end{bmatrix}$
\item This Lyapunov structure would lead to terms $P_0 L_i$ and $P_1 K_{y,i}$. These quadratic terms are eliminated by introducing new variables, 
\begin{align}
R_i = P_0 L_i, \qquad F_i = P_1 K_{y,i}
\end{align}
\end{itemize}
These steps would reduce the nonlinear matrix entries in \eqref{eq_quadratic_form_after_BRL_simplified} to linear terms. First, we define, to simplify notations,
\begin{align}
P_{0,i} = A_i^TP_0 - C^TR_i^T
\end{align}
The term $\Phi_{i}^T P$ would reduce to,
\begin{align}
\Phi_{i}^T P &= \begin{bmatrix} P_{0,i}  & -C^T F^T_{i} \\ 0  & - K_{\theta}^T P_1 \end{bmatrix} \nonumber
\end{align}
Further, we split the linearized time-varying matrices to those with constant entries and time-varying terms,
\begin{align}
\Phi_{i}^T P \Psi_{i,k} &= Q_{A,i} + \mathcal{L}_{U,i,k} \nonumber \\
\Psi_{i,k}^T P &= Q_B + \mathcal{L}_{L,k}^T
\end{align}
where, 
\begin{align}
Q_{A,i} &= \begin{bmatrix}
0 & 0 & -C^TF_{i}^T (I+K_{\theta}) & -C^T F_{i}^T \\
& & & & \\
0 & 0 & -K_{\theta}^T P_1 (I+K_{\theta}) & -K_{\theta}^T P_1
\end{bmatrix} \nonumber \\ 
Q_{B} &= \begin{bmatrix}
0 & 0 \\ 
0 & 0 \\
0 & (I+K_{\theta})^T P_1 \\ 
0 & P_1
\end{bmatrix} \nonumber \\
\mathcal{L}_{U,i,k} &= \begin{bmatrix}
P_{0,i} \Delta A_k & P_{0,i} \Delta B_k & 0 & 0 \\
& & & & \\
0 & 0 & 0 & 0
\end{bmatrix} \nonumber \\
\mathcal{L}_{L,k} &= \begin{bmatrix}
P_0 \Delta A_k & P_0 \Delta B_k & 0 & 0 \\
  0 & 0 & 0 & 0
\end{bmatrix} \label{eq_timevarying_matrices_L}
\end{align}

\paragraph{Bounds for time-varying terms}
The linearized version of the inequality in \eqref{eq_quadratic_form_after_BRL_simplified} can now be split into terms with and without time-varying terms and their corresponding transposes,
\begin{align}
Q_i + \mathcal{L}_{i,k} + \mathcal{L}_{i,k}^T < 0 \label{eq_lmi_split_constant_timevarying}
\end{align}
where,
\begin{align}
Q_{i} = 
\begin{bmatrix}
-P+I & Q_{A,i} & \Phi_{i}^T P & 0  \\
   * & -\Gamma_2 & 0 & Q_{B} \\
   * &   * & -P & 0 \\
   * &   * &  * & -P
\end{bmatrix} 
\end{align}
with $Q_{A,i}$ and $Q_B$ given in \eqref{eq_theorem_term_defs}. The time-varying terms are gathered as below, 
\begin{align}
\mathcal{L}_{i,k} &= \begin{bmatrix}
0 & \mathcal{L}_{U,i,k}& 0 & 0 \\
0 & 0   & 0 &  0 \\
0  & 0 & 0 & 0 \\
0 & \mathcal{L}_{L,k}& 0 & 0
\end{bmatrix} \label{eq_time_varying_terms_matrix}
\end{align}
and its transpose with the terms defined in \eqref{eq_timevarying_matrices_L}. There are 4 time-varying terms and their transposes in \eqref{eq_lmi_split_constant_timevarying}. In \eqref{eq_matrix_representation_uncertain_term}, we showed that the uncertain-like terms could be written as a product of matrices and further showed an interesting property of the time-varying matrix in \eqref{eq_scalar_product_timevarying_terms}. In the same lines, we can split each of the uncertain-like term in $\mathcal{L}_{i,k}$. Let us denote the four terms as part of individual matrices, $\mathcal{L}_{A1,k}$, $\mathcal{L}_{A2,k}$, $\mathcal{L}_{B1,k}$, $\mathcal{L}_{B2,k}$ corresponding to the time-varying factors, $P_{0,i} \Delta A_k$, $P_0 \Delta A_k$, $P_{0,i} \Delta B_k$, $P_0 \Delta B_k$ respectively. That is,
\begin{align}
\mathcal{L}_{A1,k} &= 
\begin{bmatrix} 
0 & \begin{bmatrix} P_{0,i} \Delta A_k & 0 & 0 \\ 0 & 0 & 0 \end{bmatrix} & 0 & 0 \\ 
0 & 0 & 0 & 0\\ 
0 & 0 & 0 & 0\\ 
0 & 0 & 0 & 0  
\end{bmatrix} \nonumber \\
\mathcal{L}_{B2,k} &= \begin{bmatrix} 
0 & 0 & 0 & 0  \\ 
0 & 0 & 0 & 0 \\  
0 & 0 & 0 & 0 \\ 
0 & \begin{bmatrix} 0 & P_0 \Delta B_k & 0 \\ 0 & 0 & 0 \end{bmatrix} & 0 & 0  \end{bmatrix}
\end{align}
and similarly for $\mathcal{L}_{B1,k}$ and $\mathcal{L}_{A2,k}$, so that,
\begin{align}
\mathcal{L}_{i,k} = \mathcal{L}_{A1,k} + \mathcal{L}_{A2,k}  + \mathcal{L}_{B1,k} + \mathcal{L}_{B2,k}
\end{align} 
It is to be noted that the $0$ entries in the matrices have appropriate dimension and are usually grouped together to make the representation easier. By taking cue from the representation in \eqref{eq_matrix_representation_uncertain_term}, we represent the uncertain-like terms as, \par
{\footnotesize
\begin{align}
\mathcal{L}_{A1,k} &= \begin{bmatrix} \begin{bmatrix} P_{0,i} \mathcal{A} \\ 0 \end{bmatrix}\\ 0  \\ 0 \\ 0 \end{bmatrix} \Sigma_{A,k} \begin{bmatrix}
0 & \begin{bmatrix} E_A & 0 & 0 \end{bmatrix} & 0 & 0
\end{bmatrix} \nonumber \\
\mathcal{L}_{A2,k} &= \begin{bmatrix} 0  \\ 0 \\ 0 \\ \begin{bmatrix} P_0 \mathcal{A} \\ 0 \end{bmatrix}\end{bmatrix} \Sigma_{A,k} \begin{bmatrix}
0 & \begin{bmatrix} E_A & 0 & 0 \end{bmatrix} & 0 & 0
\end{bmatrix} \nonumber \\
\mathcal{L}_{B1,k} &= \begin{bmatrix} \begin{bmatrix} P_{0,i} \mathcal{B} \\ 0 \end{bmatrix}\\ 0  \\ 0 \\ 0 \end{bmatrix} \Sigma_{B,k} \begin{bmatrix}
0 & \begin{bmatrix} 0 & E_B & 0 \end{bmatrix} & 0 & 0
\end{bmatrix} \nonumber \\
\mathcal{L}_{B2,k} &= \begin{bmatrix} 0  \\ 0 \\ 0 \\ \begin{bmatrix} P_0 \mathcal{B} \\ 0 \end{bmatrix}\end{bmatrix} \Sigma_{B,k} \begin{bmatrix}
0 & \begin{bmatrix} 0 & E_B & 0 \end{bmatrix} & 0 & 0
\end{bmatrix} 
\end{align}
}
Now, we apply Lemma \ref{lemma_timevaring_bounds} on the sum of these terms and their transposes. \par
{\footnotesize{
\begin{align}
&\mathcal{L}_{A1,k}+ \mathcal{L}^T_{A1,k}
\leq 
\lambda_1^{-1} 
\begin{bmatrix} \begin{bmatrix} P_{0,i} \mathcal{A} \\ 0 \end{bmatrix}\\ 0  \\ 0 \\ 0 \end{bmatrix}
\begin{bmatrix} \begin{bmatrix} \mathcal{A}^T P_{0,i}  & 0 \end{bmatrix} & 0  & 0 & 0 \end{bmatrix}
\nonumber \\
&\qquad\qquad\qquad \qquad+\lambda_1 
\begin{bmatrix} 0 \\ \begin{bmatrix} E_A^T \\ 0 \\ 0 \end{bmatrix} \\ 0 \\ 0\end{bmatrix} 
\begin{bmatrix}
0 & \begin{bmatrix} E_A & 0 & 0 \end{bmatrix} & 0 & 0
\end{bmatrix} \nonumber \\
&\leq 
\begin{pmatrix} 
\begin{bmatrix} \lambda_1^{-1} P_{0,i} \mathcal{A} \mathcal{A}^T P_{0,i} & 0 \\ 0 & 0 \end{bmatrix} & 0 & 0 & 0 \\ 
0 & \begin{bmatrix} 0 & \lambda_1 E_A^T E_A  & 0 \\ 0 & 0 & 0 \end{bmatrix} & 0 & 0 \\ 
0 & 0 & 0 & 0 \\ 
0 & 0 & 0 & 0 
\end{pmatrix} 
\end{align}
}}%
for some scalar $\lambda_1$. Similarly the sums of $\mathcal{L}_{A2,k} + \mathcal{L}^T_{A2,k}$, $\mathcal{L}_{B1,k} + \mathcal{L}^T_{B1,k}$ and $\mathcal{L}_{B2,k} + \mathcal{L}^T_{B2,k}$ are bounded (respectively) by,
{\footnotesize{
\begin{align}
\begin{pmatrix} 
0 & 0 & 0 & 0 \\ 
0 & \begin{bmatrix} \lambda_2 E_A^T E_A  & 0 & 0 \\ 0 & 0 & 0 \end{bmatrix} & 0 & 0 \\ 
0 & 0 & 0 & 0 \\ 
0 & 0 & 0 & \begin{bmatrix} \lambda_2^{-1} P_0 \mathcal{A} \mathcal{A}^T P_0 & 0 \\ 0 & 0 \end{bmatrix} 
\end{pmatrix} \nonumber \\
\begin{pmatrix} 
\begin{bmatrix} \lambda_3^{-1} P_{0,i} \mathcal{B} \mathcal{B}^T P_{0,i} & 0 \\ 0 & 0 \end{bmatrix} & 0 & 0 & 0 \\ 
0 & \begin{bmatrix} 0 & 0 & 0 \\ 0 & \lambda_3 E_B^T E_B & 0\end{bmatrix} & 0 & 0  \\ 
 & & & \\
0 & 0 & 0 & 0 \\ 
0 & 0 & 0 & 0 
\end{pmatrix} \nonumber
\end{align}
\begin{align}
\begin{pmatrix} 
0 & 0 & 0 & 0 \\ 
0 & \begin{bmatrix} 0 & 0 & 0 \\ 0 & \lambda_4 E_B^T E_B & 0\end{bmatrix}& 0 & 0 \\ 
0 & 0 & 0 & 0 \\ 
0 & 0 & 0 & \begin{bmatrix} \lambda_4^{-1} P_0 \mathcal{B} \mathcal{B}^T P_0 & 0 \\ 0 & 0 \end{bmatrix} 
\end{pmatrix} \nonumber
\end{align}
}}
Adding them all up gives,
\begin{align}
\mathcal{L}_{i,k} + \mathcal{L}_{i,k}^T 
\leq
\begin{pmatrix}
\begin{bmatrix} \mathcal{L}^1_{i} & 0 \\ 0 & 0 \end{bmatrix} & 0 & 0 & 0 \\ 
0 & \mathcal{L}^2 & 0 & 0 \\ 
0 & 0 & 0 & 0 \\ 
0 & 0 & 0 & \begin{bmatrix} \mathcal{L}^3_i & 0 \\ 0 & 0 \end{bmatrix}
\end{pmatrix} \nonumber
\end{align}
with,
\begin{align}
 \mathcal{L}^1_{i} &= \lambda_1^{-1} P_{0,i} \mathcal{A} \mathcal{A}^T P_{0,i} + \lambda_3^{-1} P_{0,i} \mathcal{B} \mathcal{B}^T P_{0,i},  \nonumber\\
 \mathcal{L}^2 &= 
\begin{bmatrix}
(\lambda_1+\lambda_2)E_A^TE_A & 0 & 0 & 0 \\
0 & (\lambda_3+\lambda_4)E_B^TE_B & 0 & 0 \\
0 & 0 & 0 & 0 \\
0 & 0 & 0 & 0
\end{bmatrix}  \nonumber \\
 \mathcal{L}^3 &= \lambda_2^{-1} P_0 \mathcal{A} \mathcal{A}^T P_0 + \lambda_4^{-1} P_0 \mathcal{B} \mathcal{B}^T P_0  \label{eq_after_timevarying_bound}
\end{align}
This would lead to the inequality in \eqref{eq_lmi_split_constant_timevarying} to,
\begin{align}
\begin{pmatrix}
 \mathcal{L}^{11}_i & Q_{A,i} & \Phi_{i}^T P & 0  \\
   * & -\Gamma_2+\mathcal{L}^2 & 0 & Q_{B} \\
   * &   * & -P & 0 \\
   * &   * &  * & \mathcal{L}^{44}
\end{pmatrix} < 0
\end{align}
where,
\begin{align}
\mathcal{L}^{11}_i &= \begin{bmatrix} -P_0+I+\mathcal{L}_{i}^1  & 0 \\ 0 & P_1\end{bmatrix} \nonumber \\
\mathcal{L}^{44} &= \begin{bmatrix} -P_0+I+\mathcal{L}^3  & 0 \\ 0 & P_1\end{bmatrix}
\end{align}
These terms have quadratic entries that could be handled by applying Schur's complement. In this way, we could consider,
{\small{
\begin{align}
\mathcal{L}^{11}_i < 0 \Leftrightarrow  
\begin{bmatrix} 
-P_0+I  & 0 & P_{0,i}\mathcal{A} & P_{0,i}\mathcal{B} \\ 
0 & P_1 & 0 & 0 \\
0 & 0 & -\lambda_1 I & 0 \\
0 & 0 & 0 & -\lambda_3 I
\end{bmatrix} < 0
\end{align}
}}
Similarly for $\mathcal{L}^{44}$. By putting them together and rearranging, we get,

%
\begin{align}
\begin{bmatrix}
-P+I & Q_{A,i}    & \Phi_i^T P & 0      &  \\
  *  & T_{22} &  0         & Q_B^T  &  T_{\mathcal{A}\mathcal{B}} \\
  *  &  *     &   -P       & 0      &  \\
  *  &  *     &    *       & -P     &  \\
     & \bigtranspose &     &       & \Lambda \\
\end{bmatrix} < 0 
\end{align}
with,
\begin{align}
T_{\mathcal{A}\mathcal{B}} &= \begin{bmatrix} 
P_{0,i} \mathcal{A} & P_{0,i} \mathcal{B} & 0 & 0 \\
0   &   0 & 0 & 0 \\
0   &   0 & 0 & 0 \\
0   &   0 & P_0 \mathcal{A} & P_0 \mathcal{B}
\end{bmatrix} \nonumber \\
\Lambda &= \begin{bmatrix}
-\lambda_1 I & 0 & 0 & 0 \\
0 & -\lambda_3I & 0 & 0 \\
0 & 0 & -\lambda_2I & 0 \\
0 & 0 & 0 & -\lambda_4I 
\end{bmatrix} \nonumber
\end{align}
Hence the proof.
\end{proof}
\begin{corollary}{}
We could formulate the observer design as an optimization problem with the objective to minimize the $\mathbb{L}_2$-gain between the perturbation factors $\tilde{u}_k$ and the errors $\mathbf{e}_{a,k}$ in \eqref{eq_error_dyn_augmented}. We could aim to minimize a scalar $\beta$, such that,
\begin{align}
\underset{P_0, P_1, F_i, R_i, \Gamma_2^j, \lambda_m}{\min} \beta
\end{align}
$\forall i \in [1,r]$, $\forall j \in \{x,u,\theta,\Delta \theta\}$, $\forall m \in \{1,2,3,4\}$ such that, the LMIs in \eqref{eq_main_lmi_condition} are satisfied along with,
\begin{align}
\beta I > \Gamma_2^j,\ \ \forall j \in \{x,u,\theta,\Delta \theta\} \label{eq_beta_Gamma_inequality}
\end{align}
There is an inherent assumption that the $\mathbb{L}_2$-gains of various perturbations are scaled appropriately so that using a single $\beta$ makes sense. This could otherwise be achieved by using appropriate scaling factor instead of $I$ on the left hand side of\eqref{eq_beta_Gamma_inequality}.
\end{corollary}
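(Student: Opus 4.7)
My plan is to observe that this corollary is essentially a convex packaging of Theorem~\ref{thm_linear_main} as a semidefinite program, so the proof proposal reduces to arguing why adjoining the new constraint preserves the LMI structure and why the minimized scalar is a meaningful certificate. Theorem~\ref{thm_linear_main} already guarantees that any feasible tuple $(P_0,P_1,R_i,F_i,\lambda_m,\Gamma_2^j)$ yields observer gains $L_i=P_0^{-1}R_i$, $K_{y,i}=P_1^{-1}F_i$ for which the augmented error dynamics~\eqref{eq_error_dyn_augmented} satisfies the dissipation inequality~\eqref{eq_lyapunov_trajectory}. Summing~\eqref{eq_lyapunov_trajectory} from $0$ to $N-1$ and letting $N\to\infty$ gives a channel-wise $\mathbb{L}_2$ bound in which each $\Gamma_2^j$ governs the amplification of the corresponding block of $\tilde u_k$ onto $e_{a,k}$. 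Since the $\Gamma_2^j$ are themselves decision variables, tightening them directly tightens the worst-case amplification certified by the Lyapunov argument.

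The next step is to introduce a single scalar upper envelope by adjoining $\Gamma_2^j \prec \beta I$ for every $j\in\{x,u,\theta,\Delta\theta\}$. Each such inequality is linear jointly in $\beta$ and $\Gamma_2^j$, so coupling it with~\eqref{eq_main_lmi_condition} produces an LMI system whose feasible set, when non-empty, is convex in all decision variables. Minimizing $\beta$ over this set is then a standard SDP, solvable by off-the-shelf interior-point tools. Feasibility at a given $\beta$ yields observer gains whose uniform $\mathbb{L}_2$-gain from $\tilde u$ to $e_a$ is at most $\sqrt{\beta}$, so minimizing $\beta$ produces the best certificate achievable within the sufficient condition of Theorem~\ref{thm_linear_main}.

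The subtle point I would flag explicitly, rather than treat as a technical obstacle, is the commensurability of the perturbation channels. The blocks of $\tilde u_k=(x_k^T,u_k^T,\Theta_k^T,\Delta\Theta_k^T)^T$ typically live on very different scales and have different physical units, so forcing a common identity bound $\beta I$ on every $\Gamma_2^j$ may either be overly conservative or mix incompatible quantities. I would therefore note, as the statement does, that replacing $I$ by positive-definite channel-specific weights $W_j$ (yielding $\Gamma_2^j \prec \beta W_j$) keeps the constraint an LMI and restores dimensional consistency without changing the convex structure of the program.

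I do not expect any real analytic obstacle; the result follows by direct convex reformulation of a condition already proved. The only step requiring design judgement, rather than mathematical work, is the choice of the per-channel weights $W_j$ and the interpretation of the resulting scalar $\beta$ as a surrogate for a multi-channel $\mathbb{L}_2$-gain.
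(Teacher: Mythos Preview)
Your proposal is correct and matches the paper's treatment: the paper states this corollary without a separate proof, taking it as an immediate convex repackaging of Theorem~\ref{thm_linear_main} into an SDP by adjoining the linear constraints $\Gamma_2^j \prec \beta I$ and minimizing $\beta$. Your explicit remarks on convexity, the summation of the dissipation inequality, and the channel-scaling caveat with weights $W_j$ are all sound elaborations of points the paper either assumes or only mentions informally.
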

\begin{corollary}{}
Measurement noise could be added to the output equation in \eqref{eq_sys_model_lin_final} so that, 
\begin{align}
\mathbf{y}_k = C \mathbf{x}_k + H \mathbf{\nu}_k
\end{align}
where, $\nu_k \in \mathbb{R}^{n_y}$ is the measurement noise with $H \in \mathbb{R}^{n_y\times n_y}$ the transmission matrix. This would lead to the perturbation variable to become $\tilde{u}_k = \begin{bmatrix} \mathbf{x}_k & \mathbf{u}_k & \Theta_k & \Delta \Theta_k & \mathbf{\nu}_k \end{bmatrix}^T$ and the matrix $\Psi_{i,k}$ in \eqref{eq_phi_psi} as,
\begin{align}
\Psi_{i,k} &= \begin{bmatrix}
\Delta A_k & \Delta B_k & 0 & 0 & -L_iH \\
0 & 0 & I+K_{\theta} & I & -K_{y,i}H
\end{bmatrix}
\end{align}
which would then lead to the same LMIs with the modifications in the following components in \eqref{eq_theorem_term_defs} as,
{\footnotesize{
\begin{align}
Q_{A,i} &= \begin{bmatrix}
0 & 0 & -C^TF_{i}^T (I+K_{\theta}) & -C^T F_{i}^T & -R_i H\\
& & & & & \\
0 & 0 & -K_{\theta}^T P_1 (I+K_{\theta}) & -K_{\theta}^T P_1 & -F_i H
\end{bmatrix} \nonumber \\
T_{22} &= 
\begin{bmatrix}
T_{22}^{11} & 0 & 0 & 0 & 0\\
0 & T_{22}^{22} & 0 & 0 & 0\\
0 & 0 & -\Gamma_2^\theta & 0 & 0\\
0 & 0 & 0 & -\Gamma_2^{\Delta \theta} & 0 \\
0 & 0 & 0 & 0 & -\Gamma_2^{\nu}
\end{bmatrix}
\end{align}
}}%
where, $\Gamma_2^\nu$ is the $\mathbb{L}_2$-gain between the noise $\nu$ and the error $\mathbf{e}_{a,k}$. It is to be noted that $\Gamma_2^\nu$ will also be added as a diagonal block in the matrix $\Gamma_2$.
\end{corollary}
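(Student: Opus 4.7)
The plan is to follow the same blueprint as the proof of Theorem 1, tracking only the modifications that the additive measurement noise introduces. First I would substitute $\mathbf{y}_k = C \mathbf{x}_k + H \nu_k$ into the innovation terms of the observer \eqref{eq_obs_model_lin_final} and recompute the state and parameter error dynamics. Since $\mathbf{\hat{y}}_k = C\mathbf{\hat{x}}_k$, the innovation becomes $\mathbf{y}_k - \mathbf{\hat{y}}_k = C \mathbf{e}_{\mathbf{x},k} + H\nu_k$, so the state error equation picks up an extra $-L_i H \nu_k$ and the parameter error equation an extra $-K_{y,i} H \nu_k$. These contributions are linear in $\nu_k$ and therefore concatenate as a fifth column of $\Psi_{i,k}$, exactly as claimed, with $\tilde{u}_k$ enlarged by appending $\nu_k$.

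Next I would rerun the Lyapunov-plus-BRL step with the augmented $\tilde{u}_k$ and the correspondingly enlarged $\Gamma_2 = \mathrm{diag}(\Gamma_2^x, \Gamma_2^u, \Gamma_2^\theta, \Gamma_2^{\Delta\theta}, \Gamma_2^\nu)$. The quadratic inequality \eqref{eq_quadratic_form_after_BRL} keeps its shape but with the extra noise column propagating through $\Phi_i^T P \Psi_{i,k}$ and $\Psi_{i,k}^T P \Psi_{j,k}$. The decisive observation is that the noise enters through \emph{constant} matrices ($L_i H$ and $K_{y,i} H$), not through the uncertain-like factors $\Delta A_k, \Delta B_k$. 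Consequently it does not appear in the time-varying residual $\mathcal{L}_{i,k}$ of \eqref{eq_time_varying_terms_matrix}, no new invocation of Lemma \ref{lemma_timevaring_bounds} is needed, and the block $T_{\mathcal{A}\mathcal{B}}$ together with $\Lambda$ stays unchanged. The new entries fall entirely inside $\Phi_i^T P \Psi_{i,k}$, where $-P_0 L_i H$ and $-P_1 K_{y,i} H$ become $-R_i H$ and $-F_i H$ after the same change of variables $R_i = P_0 L_i$, $F_i = P_1 K_{y,i}$ used in the theorem, producing the stated extra column of $Q_{A,i}$. The $(2,2)$ block grows only by the announced diagonal slot $-\Gamma_2^\nu$, and Schur complementation closes the proof as before.

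The main obstacle I anticipate is bookkeeping rather than analysis: one must pad the zeros consistently so that the new $\nu_k$-column in $\Psi_{i,k}$ matches the dimensions of $Q_{A,i}$, $T_{22}$, and $T_{\mathcal{A}\mathcal{B}}$, and verify that no residual cross-term between the noise column and the blocks $P_{0,i}\mathcal{A}, P_0\mathcal{B}$ survives to demand a fresh uncertainty bound. Once that accounting is done, the argument reduces to the observation that an additional bounded, linearly acting exogenous input on the output costs exactly one more $\mathbb{L}_2$ gain and one extra pair of constant entries in the LMI, which is precisely the content of the corollary.
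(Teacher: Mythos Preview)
Your proposal is correct and mirrors exactly the route the paper itself takes: the corollary is stated without a separate proof, relying on the reader to rerun the argument of Theorem~\ref{thm_linear_main} with the enlarged $\tilde{u}_k$ and $\Psi_{i,k}$, and your observation that the noise column contains only constant matrices (hence bypasses $\mathcal{L}_{i,k}$, Lemma~\ref{lemma_timevaring_bounds}, $T_{\mathcal{A}\mathcal{B}}$, and $\Lambda$) is precisely the point. The only residual bookkeeping you flag---padding zeros and checking that no cross-term with the uncertain blocks appears---is indeed all that remains.
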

\begin{remark}{}
It is to be noted that since there are only two time-varying terms $\Delta A_k$ and $\Delta B_k$ in \eqref{eq_time_varying_terms_matrix}, we could split the time-varying terms into only two additive factors and hence apply the Lemma \ref{lemma_timevaring_bounds} twice. However, the resulting matrix inequality is nonlinear with crossover terms making it impossible to resolve. Hence four additive factors were used.
\end{remark}
\begin{remark}{}
In the continuous-time version in \cite{bezzaoucha2013state}, the factor $K_{\theta}$ allowed to avoid numerical issues in the LMI conditions. In our work, this has been followed through. The value of $K_{\theta}$ however, is also important because it may lead to the effect of the innovation term $K_{y,i}(\mathbf{y}_k-\mathbf{\hat{y}}_k)$ to become negligible due to relative scaling between $K_{\theta}$ and $K_{y,i}$ as discussed in \cite{srinivasarengan2016nonlinear}. This could be done by adding an extra condition. For example, for a scalar parameter estimation case, let $k_\theta$ be the scalar value of the observer gain $K_\theta$, which would lead to the condition, 
\begin{align}
\frac{1}{k_{\theta}} K_{y,i}> \rho
\end{align}
where $\rho>1$ is a constant chosen depending upon the relative scaling between $\theta$ and $\mathbf{y}_k-\mathbf{\hat{y}}_k$. Along with the LMIs in \eqref{eq_main_lmi_condition}, we could add, for a scalar case,
\begin{align}
F_i > \rho P_1 k_{\theta}
\end{align}
\end{remark}
\begin{remark}{}\label{remark_Qe}
The LMI in \eqref{eq_main_lmi_condition} could be considered restrictive partly because of the term $-P+I$ that calls for $P$ to be more positive than $I$. This starts with the term, $e_{a,k}^T I e_{a,k}$ in the Lyapunov function trajectory in \eqref{eq_lyapunov_trajectory}. If a solution is unavailable for this case, we could replace this with $e_{a,k}^T Q_e e_{a,k}$, where $Q_e$ could be chosen to be a value that allows for a solution to the LMI \eqref{eq_main_lmi_condition} exists.
\end{remark}
\begin{remark}{}
The extension of the Theorem \ref{thm_linear_main} for nonlinear system represented by T-S models is straightforward. That is the nonlinear model has to be transformed as a T-S model with time-varying matrices and then all the proposed development with the introduction of a supplementary index for some of the matrices involved. Further the weighting functions now would be the products of membership functions of both unknown parameters as well as the premise variables of the T-S model. And hence the components of the uncertain-like terms would be different.
\end{remark}
\section{Adaptive Observer Design} \label{sec_adapt_observer_design}
As discussed in Sec. \ref{sec_introduction}, adaptive observers deal with cases where the unknown parameter is a constant, that is $\Theta_{k+1} = \Theta_k = \Theta$. For the discrete-time version of the nonlinear system in \cite{cho_systematic_1997} of the form,
\begin{align}
\mathbf{x}_{k+1} &= A \mathbf{x}_k + \phi(\mathbf{x}_k,\mathbf{u}_k) + b f(\mathbf{x}_k,\mathbf{u}_k) \Theta \nonumber \\
\mathbf{y}_k &= C \mathbf{x}_k \label{eq_nonlin_adapt_obs_form_discrete}
\end{align}
a quasi-LPV equivalent would be of the form \eqref{eq_ts_with_theta_initial}. Our aim is to design an adaptive observer for this model, assuming that we know a range of values $[\theta_i^1, \theta_i^2]$ in which the true value of each of the $\theta_i, \forall i \in 1,..,n_\theta$ lies. This substitutes for the sector bounds for the time-varying case. Following the same argument of applying SNL transformation for the time-varying parameter in the previous section with these bounds, we obtain,
\begin{align}
\mathbf{x}_{k+1} &= \sum_{i=1}^{s} h_i(\mathbf{z}_k,\Theta_k) (A_i \mathbf{x}_k + B_i \mathbf{u}_k) \nonumber \\
\mathbf{y}_k &= C \mathbf{x}_k \label{eq_sys_model_ts_final}
\end{align}
where $s=2^{n_p+n_\theta}$ and $h_i(\mathbf{z}_k,\Theta_k)$ is the weighting function obtained by normalizing the product of membership functions associated with the premise variables $\mathbf{z}_k$ and the parameters $\theta_k$. For this type of system, we propose an observer of the form,
\begin{align}
\mathbf{\hat{x}}_{k+1} &= \sum_{i=1}^s h_i(\hat{z}_k, \hat{\Theta}_k) \left[ A_i \hat{x} + B_i \mathbf{u}_k + L_i(\mathbf{y}_k-\mathbf{\hat{y}}_k) \right] \nonumber \\
\hat{\Theta}_{k+1} &= \hat{\Theta}_k + \sum_{i=1}^s h_i(\hat{z}_k, \hat{\Theta}_k) K_{y,i}(\mathbf{y}_k-\mathbf{\hat{y}}_k) \nonumber \\
\mathbf{\hat{y}}_k &= C \mathbf{x}_k \label{eq_adaptive_observer_model}
\end{align}
As could be noted, the $K_{\theta}$ gain term has been dropped. One main reason is the simplification this offers (would be apparent soon). Further, the condition $K_{\theta}=0$ lead to unsolvable LMIs in the continuous-time case, but not in discrete-time. To compute the state and parameter error, we follow the uncertain-like model approach, the augmented error dynamics is given by,
\begin{align}
\mathbf{e}_{a,k+1} &= \begin{bmatrix}
A_i-L_iC & 0 \\ -K_{y,i} C & 0 
\end{bmatrix}
\mathbf{e}_{a,k}
+
\begin{bmatrix}
\Delta A_k & \Delta B_k \\ 0 & 0
\end{bmatrix}
\begin{bmatrix}
\mathbf{x}_k \\ \mathbf{u}_k
\end{bmatrix} \nonumber
\end{align}
As could be seen, the number of perturbations has reduced and hence the dynamics matrices simplified. By applying discrete-time BRL and following it up with the application of LMI equivalence using Schur's complement (Lemma \ref{lemma_schur}), and then splitting the Lyapunov matrix to be of the form $P = \begin{bmatrix} P_0 & 0 \\ 0 & P_1 \end{bmatrix}$, and applying the variable transformations $R_i = P_0 L_i$ and $F_i = P_1 K_{y,i}$, we get,
\begin{align}
\begin{bmatrix}
-P+I & \Phi_{i}^T P \Psi_{i,k} & \Phi_{i}^T P & 0  \\
  *    & -\Gamma_2 & 0 & \Psi_{i,k}^T P \\
  *    & * & -P & 0 \\
  *    & * & * & -P
\end{bmatrix} < 0
\end{align}
with
\begin{align}
\Phi_i^T P \Psi_{i,k} &= \begin{bmatrix} P_{0,i} \Delta A_k & P_{0,i} \Delta B_k \\ 0 & 0  \end{bmatrix} \nonumber \\
\Phi_i^T P &= \begin{bmatrix} P_{0,i} & -C^TF_i^T \\ 0 & 0 \end{bmatrix} \nonumber \\
\Psi_{i,k}^T P &= \begin{bmatrix} \Delta A_k^T P_0 & \Delta B_k^T P_0 \\ 0 & 0\end{bmatrix} \label{eq_new_phi_p_etc_matrices}
\end{align}
Further following the same steps described in the proof of Theorem \ref{thm_linear_main}, we can summarize the results as follows,
\begin{theorem}{}
Given a nonlinear discrete-time system of the form \eqref{eq_nonlin_adapt_obs_form_discrete} which can be transformed to a T-S model, we could design an observer of the form \eqref{eq_adaptive_observer_model}, if there exists $P_0$, $P_1$, $R_{i}$, $F_{i}$, $\lambda_1$, $\lambda_2$, $\lambda_3$, $\lambda_4$, $\Gamma_2^j$ ($\forall i \in [1,r], \forall j\in \{x,u\}$), such that,
\begin{align}
&\qquad P_0 =P_0^T > 0, \ P_1 = P_1^T> 0 \\
&\qquad\lambda_m>0,\ \forall m\in{1,2,3,4}, \ \Gamma_2^j>0, \forall j \\
&\begin{pmatrix}
-P+I & 0      & \Phi_i^T P & 0    &  \\
  *  & T_{22} &  0         & 0    &  T_{\mathcal{A}\mathcal{B}} \\
  *  &  *     &   -P       & 0    &  \\
  *  &  *     &    *       & -P   &  \\
     & \bigtranspose &     &      & \Lambda \\
\end{pmatrix} < 0 \label{eq_main_lmi_condition_adaptive_observer}
\end{align}
where $\Phi_i^T P$ is given in \eqref{eq_new_phi_p_etc_matrices}, $\Lambda$ is given in \eqref{eq_theorem_term_defs}, $T_{\mathcal{A}\mathcal{B}}$ has the same structure as in  \eqref{eq_theorem_term_defs} except to accommodate the changes in the number of zero rows due to the change of $T_{22}$ to,
\begin{align}
T_{22} &=
\begin{bmatrix}
T_{22}^{11} & 0  \\
0 & T_{22}^{22}  \\
\end{bmatrix}\nonumber
\end{align}
where, $T_{22}^{11} = -\Gamma_2^x + (\lambda_1+\lambda_2) E_A^T E_A$ and $T_{22}^{11} =  -\Gamma_2^u + (\lambda_3+\lambda_4) E_B^T E_B$.
\end{theorem}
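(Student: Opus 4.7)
{}
The plan is to reuse the proof architecture of Theorem \ref{thm_linear_main} but exploit the two simplifications present in the adaptive case: the parameter is constant (so $\Delta\Theta_k=0$ and $\Theta_k$ disappears from the perturbation vector) and the gain $K_\theta$ has been set to zero. Together these eliminate the $\Gamma_2^\theta$ and $\Gamma_2^{\Delta\theta}$ blocks of $\tilde{u}_k$, as well as the $I+K_\theta$ and $-K_\theta^T P_1$ terms that appear inside $Q_{A,i}$, $Q_B$ and $\Phi_i^T P$ in the original development.

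First I would take the augmented error dynamics already displayed just above the theorem statement, namely $\mathbf{e}_{a,k+1} = \Phi_i \mathbf{e}_{a,k} + \Psi_{i,k}\bigl[\mathbf{x}_k^T\ \mathbf{u}_k^T\bigr]^T$ with $\Phi_i$ and $\Psi_{i,k}$ now having their lower block rows equal to zero, and pick the quadratic Lyapunov function $V_k = \mathbf{e}_{a,k}^T P\, \mathbf{e}_{a,k}$. Enforcing the dissipation inequality \eqref{eq_lyapunov_trajectory} with $\tilde{u}_k=[\mathbf{x}_k^T\ \mathbf{u}_k^T]^T$ and applying the discrete-time Bounded Real Lemma (Lemma \ref{lemma_brl_discrete}) produces the quadratic matrix inequality \eqref{eq_quadratic_form_after_BRL} with the conservative choice $j=i$. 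I would then apply Schur's complement (Lemma \ref{lemma_schur}) to move the $\Phi_i^T P\Phi_i$ and $\Psi_{i,k}^T P\Psi_{i,k}$ terms out, yielding the intermediate matrix inequality already written in the adaptive section with $\Phi_i^T P$, $\Phi_i^T P\Psi_{i,k}$ and $\Psi_{i,k}^T P$ given by \eqref{eq_new_phi_p_etc_matrices}.

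Next, to linearize the remaining nonlinear entries I would impose the block-diagonal Lyapunov structure $P=\mathrm{diag}(P_0,P_1)$ and introduce the change of variables $R_i = P_0 L_i$ and $F_i = P_1 K_{y,i}$, exactly as in the proof of Theorem \ref{thm_linear_main}. This removes the products $P_0 L_i$ and $P_1 K_{y,i}$ from $\Phi_i^T P$, at which point only the time-varying terms $P_{0,i}\Delta A_k$, $P_{0,i}\Delta B_k$, $P_0\Delta A_k$ and $P_0\Delta B_k$ remain as obstacles. I would then split these into the four additive factors $\mathcal{L}_{A1,k}$, $\mathcal{L}_{A2,k}$, $\mathcal{L}_{B1,k}$, $\mathcal{L}_{B2,k}$ using the factorization \eqref{eq_matrix_representation_uncertain_term} and apply Lemma \ref{lemma_timevaring_bounds} with four positive scalars $\lambda_1,\lambda_2,\lambda_3,\lambda_4$, producing bounds identical in shape to \eqref{eq_after_timevarying_bound} but with the $\theta$ and $\Delta\theta$ rows and columns removed. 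A final Schur complement absorbs the quadratic entries $\lambda_m^{-1}P_{0,i}\mathcal{A}\mathcal{A}^T P_{0,i}$, $\lambda_m^{-1}P_0\mathcal{A}\mathcal{A}^T P_0$ and their $\mathcal{B}$ analogues into the extra diagonal block $\Lambda$ together with the corresponding columns collected inside $T_{\mathcal{A}\mathcal{B}}$, which yields precisely the LMI \eqref{eq_main_lmi_condition_adaptive_observer}.

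I do not foresee a serious technical obstacle here since every ingredient is already established in the proof of Theorem \ref{thm_linear_main}; the only thing to watch is bookkeeping. Specifically, the $T_{\mathcal{A}\mathcal{B}}$ matrix in \eqref{eq_theorem_term_defs} was written with four column blocks of $\tilde{u}_k$, and with the disappearance of the $\Theta_k$ and $\Delta\Theta_k$ coordinates the corresponding zero rows must be removed so that the block dimensions are consistent with the new two-block $T_{22}$. Likewise the $0$ entries in the (1,2) and (2,4) positions of \eqref{eq_main_lmi_condition_adaptive_observer} reflect the vanishing of $Q_{A,i}$ and $Q_B$ once $K_\theta=0$, which should be verified by inspection against \eqref{eq_theorem_term_defs}. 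Once these dimension adjustments are done, the recovery of \eqref{eq_main_lmi_condition_adaptive_observer} is immediate.
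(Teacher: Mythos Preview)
Your proposal is correct and is essentially the paper's own argument: the paper's proof of this theorem is literally the single sentence ``The proof follows that of Theorem \ref{thm_linear_main} with the changes in the matrix block entries discussed above,'' and your write-up simply makes those changes explicit. The bookkeeping points you flag (dropping the $\Theta_k$, $\Delta\Theta_k$ blocks so that $Q_{A,i}$ and $Q_B$ vanish and $T_{22}$, $T_{\mathcal{A}\mathcal{B}}$ shrink accordingly) are exactly what the paper means by ``the changes in the matrix block entries discussed above.''
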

\begin{proof}{}
The proof follows that of Theorem \ref{thm_linear_main} with the changes in the matrix block entries discussed above.
\end{proof}
\section{Simulation Example} \label{sec_sim_example}
We consider the discrete-time version of the simplified waste water treatment plant from \cite{bezzaoucha2013state}. The simplification concerns reducing the 10-state system to a 2-state model, given by,
\begin{align}
x_{1,k+1} &= x_{1,k} + T_s \left[ \frac{a x_{1,k}}{x_{2,k}+b} x_{2,k} - x_{1,k} u_k \right] \nonumber \\
x_{2,k+1} &= x_{2,k} + T_s \left[ -\frac{c a x_{1,k}}{x_{2,k}+b} x_{2,k} + (d-x_{2,k}) u_k\right] \nonumber \\
y_k &= x_{1,k}
\end{align}
In this model, we consider an uncertainty in the parameter $a$, that is,
\begin{align}
a = a_0 + \theta
\end{align}
The parameters of the model used are given in Table \ref{tab_model_parameters}.
\begin{table}
\centering
\caption{Model Parameters}
\begin{tabular}{|c|c|}
\hline
\textbf{Parameter} & \textbf{Value} \\
\hline
$a_0$ & $0.5$\\
$b$ & $0.4$\\
$c$ & $0.4$\\
$d$ & $2$\\
$T_s$ & $1$\\
\hline
\end{tabular}
\label{tab_model_parameters}
\end{table}
\begin{figure}
\centering
\includegraphics[scale=0.35]{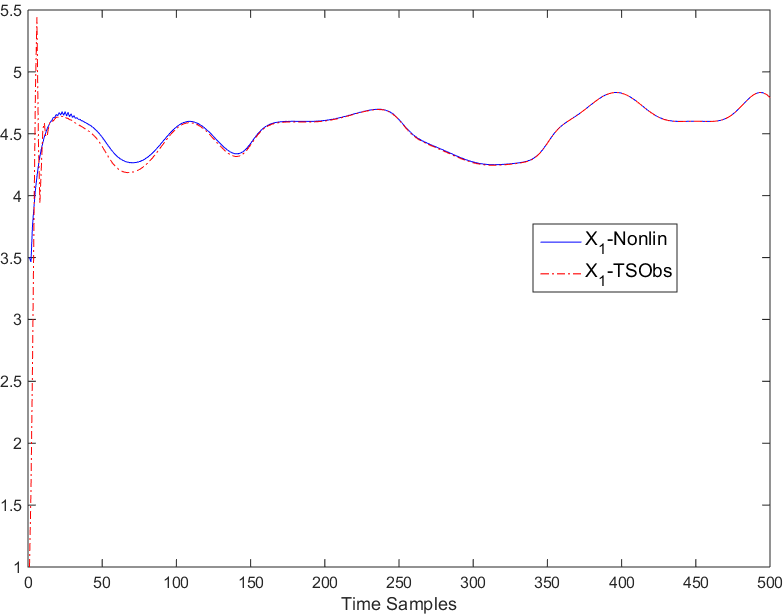}
\caption{Estimation of $x_{1,k}$}
\label{fig_state_1_estimation}
\end{figure}%
The unknown parameter $\theta$ in \eqref{eq_adaptive_observer_model} is constant; however, for the observer design purposes we assume it to be known and in the range,
\begin{align}
\theta \in [-0.3, 0.3]
\end{align}
We choose the premise variables, 
\begin{align}
z_{1,k} = u_k, \qquad \text{and}, \qquad z_{2,k} = \frac{x_{1,k}}{x_{2,k}+b}
\end{align} 
It is evident that $z_{2,k}$ depends on the unmeasured state $x_{2,k}$ making it unmeasured. Assuming a range of values for $u_k \in [0, 0.4]$, $x_{1,k} \in [0.01,6]$, and $x_{2,k} \in [0.01, 3]$, we get the range of the premise variables as, 
\begin{align}
z_{1,k} \in [0,0.4], \qquad z_{2,k} \in [0.003, 14.63]
\end{align}
\begin{figure}
\centering
\includegraphics[scale=0.35]{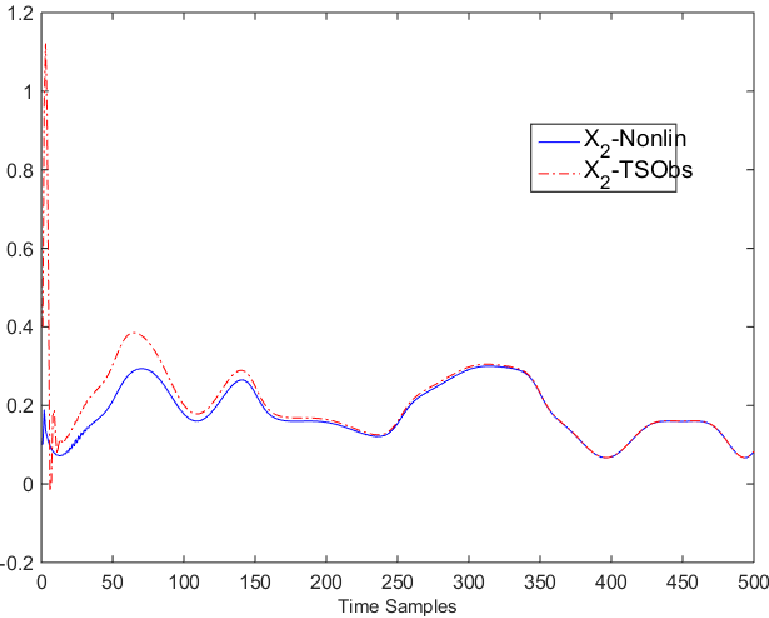}
\caption{Estimation of $x_{2,k}$}
\label{fig_state_2_estimation}
\end{figure}
With these parameters, we get the model
\begin{align}
\mathbf{x}_{k+1} &= \sum_{i=1}^8 h_i(\mathbf{z}_k,\theta_k) \left[A_i \mathbf{x}_k + B_i u_k \right]
\end{align}
where $h_i(\mathbf{z}_k,\theta)$ is obtained from the product of membership functions of $z_{1,k}$, $z_{2,k}$ and $\theta$ corresponding to the submodel $i$. The system matrices are given by,
{\scriptsize{
\begin{align}
A_1 &=
\begin{bmatrix}
1 & 5.9\times 10^{-4} \\ 0 & 0.99
\end{bmatrix}
A_2 =
\begin{bmatrix}
1 & 0.0024 \\ 0 & 0.99
\end{bmatrix}
A_3 = 
\begin{bmatrix}
1 & 2.92 \\ 0 & -0.17
\end{bmatrix} \nonumber \\
A_4 &= 
\begin{bmatrix}
1 & 11.7 \\ 0 & -3.68
\end{bmatrix}
A_5 = 
\begin{bmatrix}
0.6 & 5.9\times 10^{-4} \\ 0 & 0.6
\end{bmatrix}
A_6 = 
\begin{bmatrix}
0.6& 0.0024 \\ 0 & 0.6
\end{bmatrix} \nonumber \\
A_7 &=
\begin{bmatrix}
0.6 & 2.93 \\ 0 & -0.57
\end{bmatrix}
A_8 =
\begin{bmatrix}
0.6 & 11.7 \\ 0 & -4.1
\end{bmatrix} \ 
\text{and}, \qquad
B_i = \begin{bmatrix}
0 \\ 2
\end{bmatrix},\ \forall i \nonumber
\end{align}
}}%
\begin{figure}[h]
\centering
\includegraphics[scale=0.45]{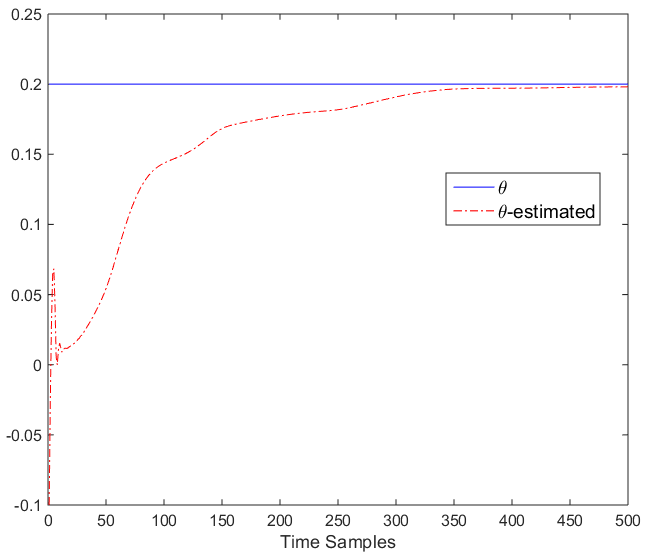}
\caption{Estimation of $\theta_{k}$}
\label{fig_para_estimation}
\end{figure}%
We used Matlab with Yalmip \cite{lofberg2004yalmip} interface with LMIlab toolbox to solve the LMI conditions. 
\begin{remark}{}
Some problem specific conditions could be added to obtain an optimum solution to the problem. For example, pole placement for the state observers $A_i-L_iC$ could be added as a separate LMI condition so as to achieve favourable rate of convergence. Further, some minimum value for the gain corresponding to parameter estimation, $K_i, \forall i$ could be imposed so that the innovation term is useful in augmenting the estimated $\theta$ (due to the relative scaling between the values of $\hat{\theta}$ and $y-\hat{y}$).  Further, as noted in Remark \ref{remark_Qe}, the value of $Q_e$ was chosen as 
\begin{align*}
Q_e = \begin{bmatrix} 0.001I_{n_x} & 0 \\ 0 & 0.1I_{n_{\theta}}
\end{bmatrix}
\end{align*} 
\end{remark}
\begin{remark}{}
It is to be noted that there are a number of variables to be determined by the LMI solver. This could be reduced by fixing some of the parameters. For this example, we chose the values for $\Gamma_2^x=\Gamma_2^u=0.1$ and $\lambda_i=0.001,\ \forall i=1,2,3,4$. The significantly reduces the computational complexity of the problem.
\end{remark}
With the above conditions, we obtain the following observer gain values, \par
{\scriptsize{
\begin{align}
L_1 &= \begin{bmatrix} 0.23 \\ 0.24 \end{bmatrix}
L_2 = \begin{bmatrix} 0.23 \\ 0.24 \end{bmatrix}
L_3 = \begin{bmatrix} 0.31 \\ 0.21 \end{bmatrix}
L_4 = \begin{bmatrix} 0.61 \\ 0.09 \end{bmatrix} \nonumber \\
L_5 &= \begin{bmatrix} -0.41 \\ 0.30\end{bmatrix}
L_6 = \begin{bmatrix} -0.41 \\ 0.30\end{bmatrix}
L_7 = \begin{bmatrix} -0.41 \\ 0.30 \end{bmatrix}
L_8 = \begin{bmatrix} -0.27 \\ 0.24 \end{bmatrix} \nonumber
\end{align}
}}
and $K_i = 0.03,\ \forall i$.  

With these gain values, we obtain the state estimation results as shown in the Fig \ref{fig_state_1_estimation} and \ref{fig_state_2_estimation}. The estimation of the unknown parameter is given in Fig \ref{fig_para_estimation}.
Further, the input used for the simulation is shown in the Fig \ref{fig_input}.
\begin{figure}
\centering
\includegraphics[scale=0.45]{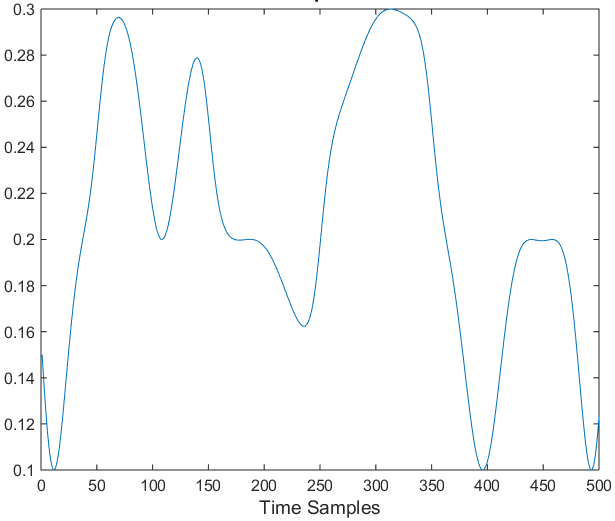}
\caption{Input used for simulation}
\label{fig_input}
\end{figure}
To illustrate the nonlinearity of the model, we show the variation of the weighting function $h_i(\hat{x},\hat{\theta})$ in the Fig \ref{fig_weighting_functions}.
\section{Concluding Remarks} \label{sec_concluding_remarks}
We presented an adaptive observer design procedure for discrete-time nonlinear systems which could be converted to a quasi-LPV form. The presented work fills a gap for adaptive observer design for discrete-time nonlinear systems for those cases where the transformed quasi-LPV matrices depend on one of the unmeasured states of the system. The results in this approach are conservative, but provides an opening in exploring a systematic approach for adaptive observer design for nonlinear system of this type. One interesting extension could be to explore other observer structures, especially the one proposed in \cite{guyader2003adaptive} and follow the similar design strategy of our work so as to expand for unmeasured premise variable case.
\begin{figure}
\centering
\includegraphics[scale=0.4]{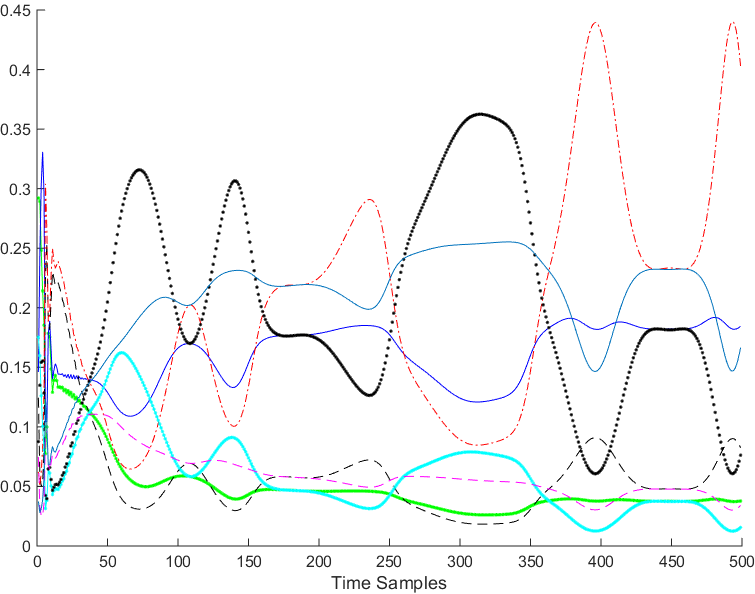}
\caption{Weighting functions of the submodels}
\label{fig_weighting_functions}
\end{figure}

\bibliography{./../../../../researchRef,./../../../../nonlinAdaptObs}

%
\end{document}